\newtheorem{thm}{{{Theorem}}}[section]
\newtheorem{lem}[thm]{{Lemma}}
\newtheorem{remark}[thm]{Remark}
\numberwithin{equation}{section}
\newtheorem{Def}[equation]{Definition}
\def\Z{\mathbb{Z}}
\def\Q{\mathbb{Q}}
\def\C{\mathbb{C}}
\def\A{\mathbb{A}}
\def\O{{\mathop{\mathrm{O}}}}
\def\diag{{\mathop{\mathrm{diag}}}}
\def\bs{{\backslash}}
\def\ds{\displaystyle}
\def\lra{{\longrightarrow}}
\def\tf{{\tilde{f}}}
\newcommand{\oF}{\overline{F}}
\newcommand{\lla}{\longleftarrow}
\renewcommand{\O}{\mathcal{O}}
\newcommand{\tG}{\widetilde{G}}
\newcommand{\wf}{\widetilde{f}}
\numberwithin{equation}{section}
\title[Transfers of some Hecke elements for possibly ramified base change  in $GL_n$]
{Transfers of some Hecke elements for possibly ramified base change in $GL_n$}
\author{Takuya Yamauchi}
\date{\today}
\keywords{transfer, Hecke elements, ramified base change}
\thanks{The author is partially supported by 
JSPS KAKENHI Grant Number (B) No.19H01778.}
\subjclass[2010]{22E50, 20C08}
\address{Takuya Yamauchi \\
Mathematical Inst. Tohoku Univ.\\
 6-3,Aoba, Aramaki, Aoba-Ku, Sendai 980-8578, JAPAN}
\email{takuya.yamauchi.c3@tohoku.ac.jp}
\begin{document}
\maketitle

\begin{abstract}
In this paper, we prove an explicit matching theorem for 
some Hecke elements in the case of (possibly ramified) cyclic base 
change for general linear groups over local fields of characteristic zero with 
odd residual characteristic under a mild assumption. 
A key observation, based on the works of  Waldspurger and Ganapathy-Varma, is to regard the base change lifts with twisted endoscopic lifts and 
replace the condition for the matching orbital integrals with one for semi-simple descent 
in the twisted space according to Waldspurger's fundamental work ``L'endoscopie tordue n'est pas si tordue''. 
\end{abstract}

\tableofcontents

\section{Introduction}\label{ntro}
The transfers of Hecke elements play a fundamental role to study  
automorphic representations in Langlands correspondences. 
Thanks to many people including Waldspurger, Ngo among others, 
the existence of a transfer for any Hecke element is now guaranteed 
(cf. \cite{NgoICM}). 
However, to describe the transfer explicitly is a different matter. 
In this article we prove an explicit transfer theorem for the characteristic 
function of any principal congruence subgroup for any cyclic base change in the case of 
the general linear group $GL_n$ over any local field of odd residual characteristic with a mild condition. 
The unramified case in the unit elements of 
Hecke algebras associated to hyperspecial open compact subgroups is already done by 
Kottwitz (see p.239 of \cite{Ko}) in more general setting and therefore, the ramified case has been remained to be open though some experts might have known.   

Let us set up some notation to explain our results. 
Let $p$ be an odd prime number and $F$ a finite extension of $\Q_p$. 
Let $E$ be a cyclic extension of $F$ with the generator $\theta$ of the 
Galois group ${\rm Gal}(E/F)$. Put $d=[E:F]$. Let $G={\rm Res}_{E/F}(GL_n/E)$ 
be the Weil restriction of the general linear group $GL_n/E$ to $F$ and let $H=GL_n/F$. 
The action of $\theta$ is naturally extended to $G(F)=GL_n(E)$ and we also denote it by 
$\theta$ again. 
Let $\O_E$ (resp. $\O_F$) be the ring of integers of $E$ (resp. $F$) with 
a uniformizer $\varpi_E$ (resp. $\varpi_F$). We denote by $e=e(E/F)$  
the ramification index so that $\varpi^e_E$ and $\varpi_F$ differ by a unit in $\O_E$. 
For a positive integer $m$, put $K_E(m):=I_n+\varpi^m_E M_n(\O_E)$ where 
$I_n$ stands for the identity matrix of size $n$. Similarly, 
put $K_F(m):=I_n+\varpi^m_F M_n(\O_F)$. 
For each open compact subgroup $C$ of $G(F)$ or $H(F)$, 
we denote by $\textbf{1}_C$ the characteristic function of $C$. 
Let us fix Haar measures 
$dg$ on $G(F)$ and $dh$ on $H(F)$ respectively. 
Then we will prove the following: 
\begin{thm}\label{main1}Keep the notation as above. Let $m$ be a positive integer. 
Suppose that the residual characteristic $p$ of $F$ is odd and prime to $d=[E:F]$. 
The Hecke element ${\rm vol}^{-1}(K_E(m)_\theta)\textbf{1}_{K_E(m)_\theta}$  in $C^\infty_c(GL_n(F))$ is a transfer $($in the sense of Definition \ref{moi}$)$ 
of  the Hecke element 
${\rm vol}^{-1}(K_E(m))\textbf{1}_{K_E(m)}$ in 
$C^\infty_c(G(F))=C^\infty_c(GL_n(E))$ for the base change lift 
in $(G,GL_n/F)$. Here  
${\rm vol}$ stands for the volume with respect to the fixed measure.   
\end{thm}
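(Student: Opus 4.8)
The plan is to reduce the matching of orbital integrals to a statement about semisimple descent in the twisted space, following Waldspurger's "L'endoscopie tordue n'est pas si tordue" as adapted by Ganapathy–Varma. First I would recall the definition of transfer (Definition \ref{moi}): for every strongly $\theta$-regular semisimple $\delta \in G(F) = GL_n(E)$ with norm $\gamma \in GL_n(F)$, one needs the twisted orbital integral $O^\theta_\delta\bigl(\vol^{-1}(K_E(m))\trep_{K_E(m)}\bigr)$ to equal (up to the appropriate transfer factor, which is trivial in the stable-base-change situation for $GL_n$) the orbital integral $O_\gamma\bigl(\vol^{-1}(K_E(m)_\theta)\trep_{K_E(m)_\theta}\bigr)$, and to vanish when $\gamma$ is not a norm. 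The key conceptual move, as advertised in the abstract, is to realize the base change lift as a twisted endoscopic transfer: one passes to the twisted space $\widetilde{G} = G \rtimes \theta$ and works with the component $G \theta$, so that $\theta$-conjugacy on $G(F)$ becomes ordinary conjugacy in $\widetilde{G}(F)$ restricted to that component.

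The main technical step is a local descent: given $\delta\theta$ semisimple in $\widetilde G(F)$, Waldspurger's descent identifies a neighborhood of its class with data on the connected centralizer $G_{\delta\theta} = (G^\theta)^\circ$-type group, and matches twisted orbital integrals on $G$ near $\delta\theta$ with ordinary orbital integrals on an endoscopic group near the image point. I would verify that under this descent the open compact subgroup $K_E(m)$ and its test function behave compatibly: concretely, that the $\theta$-fixed points of $K_E(m)$, namely $K_E(m)_\theta = (I_n + \varpi_E^m M_n(\O_E))^{\langle\theta\rangle}$, descend to exactly $K_F(\lceil m/e\rceil)$ or the relevant congruence subgroup of $GL_n(F)$ — here the hypothesis $p \nmid d$ is essential, because it guarantees (via a standard averaging/idempotent argument, $\tfrac1d\sum_{i}\theta^i$ being integral) that $H^1$ of $\langle\theta\rangle$ acting on the relevant $\O_E$-lattices and on $M_n(\O_E)$ vanishes, so taking $\theta$-invariants is exact and commutes with reduction mod $\varpi_E^m$. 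This vanishing is what lets one compute the volume factors $\vol(K_E(m))$ and $\vol(K_E(m)_\theta)$ compatibly and ensures the descended function is again (a multiple of) a characteristic function of a congruence subgroup.

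Concretely I would proceed as follows. (1) Normalize Haar measures on $G(F)$, $GL_n(F)$, and on centralizers so that the descent identity of Waldspurger/Ganapathy–Varma holds with the constant $1$. (2) For $\delta$ with norm $\gamma$, use the descent to replace $G$ near $\delta\theta$ by (a twisted space over) the centralizer $GL_n(E)^{\theta,\delta}$, which — because $\gamma$ is a norm and $p\nmid d$ — is an inner form of, in fact is isomorphic to, $GL_n(F_\gamma)$ for the relevant étale $F$-algebra; match the restriction of $\trep_{K_E(m)}$ under this identification with $\trep$ of the corresponding congruence subgroup of the centralizer. (3) Invoke the unramified (Kottwitz) base-change fundamental lemma, or rather the analogous statement for the unit/congruence elements on the descended group, which is known, to conclude the numerical matching. (4) Handle the non-norm case: if $\gamma$ is not a norm from $E$, then either $\delta\theta$ has no semisimple $\theta$-conjugate meeting $K_E(m)\theta$, or the descent produces an anisotropic-mod-center obstruction, forcing $O^\theta_\delta = 0$, matching the vanishing of $O_\gamma$ on the $GL_n(F)$-side by the same descent applied trivially.

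The hard part will be step (2): controlling the descent \emph{integrally}, i.e. showing that the congruence structure is preserved under Waldspurger's descent isomorphism and identifying precisely which congruence subgroup of the centralizer $\trep_{K_E(m)}$ descends to, uniformly in $\delta$. The ramification of $E/F$ makes the bookkeeping of $\varpi_E$ versus $\varpi_F$ delicate, and one must check that the exponent $m$ transforms correctly (this is where $e = e(E/F)$ enters) and that no spurious constants appear in the volume comparison. Once the integral descent is established, the remaining input is the already-known unramified case together with the exactness of $\theta$-invariants granted by $p \nmid d$, so the proof reduces to careful linear algebra over $\O_E$ and the cited structural theorems.
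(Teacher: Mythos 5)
Your overall framework (base change as twisted endoscopy, trivial transfer factor, a Waldspurger/Ganapathy--Varma style descent, and the use of $p\nmid d$ to make taking $\theta$-invariants of lattices exact) is the same as the paper's, but your argument has a genuine gap at its core. Step (3) appeals to ``the analogous statement for the unit/congruence elements on the descended group, which is known'': it is not known --- the congruence-subgroup statement is precisely what is being proved, and Kottwitz's unramified fundamental lemma neither applies to ramified $E/F$ nor to non-unit congruence elements. Moreover the ``hard part'' you flag (controlling the descent integrally and uniformly in $\delta$, identifying which congruence subgroup of each centralizer $\textbf{1}_{K_E(m)}$ descends to) is never resolved in your proposal, and the paper never has to face it. The paper's key observation is that both test functions are supported in sets of topologically unipotent elements ($K_E(m)\subset G(F)_{{\rm tu}}$, $K_E(m)_\theta\subset H(F)_{{\rm tu}}$), so only topologically unipotent classes contribute; for such classes one never descends at a general $\delta\rtimes\theta$ at all. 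Instead, by Lemma \ref{cprime} (the $d$-th power map is a homeomorphism of $H(F)_{{\rm tu}}$, using $p$ odd and $p\nmid d$) every topologically unipotent semisimple $\gamma$ has a $d$-th root $\gamma'\in H(F)_{{\rm tu}}$ with $N_{E/F}(\gamma')=\gamma'^d=\gamma$, and by Lemma 1.1 of \cite{AC} together with Lemma \ref{bijection} all $\delta$ with norm $\gamma$ form a single $\theta$-conjugacy class, so one may take $\delta=\gamma'\in H(F)$. Everything then reduces to a single descent at $\theta$ itself, whose twisted centralizer is $H=GL_n/F$: Theorem \ref{mainSSD} proves ${\rm tc}(K_L,K_{L,\theta})=K_L\rtimes\theta$ by a successive-approximation argument with the Cayley transform and the eigenspace decomposition of $d\theta$ (the units $1-\zeta_d^{n_i}$ are where $p\nmid d$ enters), and the resulting identity of orbital integrals is then an application of the integration formula (Lemma 4.2.5 of \cite{GV}) with $H$ appearing on both sides --- no fundamental lemma input of any kind.

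Two further points. First, your treatment of the non-norm case is backwards: when $\gamma$ is not a norm the sum over $\delta$ is empty and the burden is to show $I_\gamma(f^H,dh,dh_\gamma)=0$ on the $GL_n(F)$-side; in the paper this follows because a strongly regular $\gamma$ that is not topologically unipotent has conjugacy class disjoint from the support $K_E(m)_\theta$, while a topologically unipotent $\gamma$ is automatically a norm by the $d$-th root argument, so non-norm elements simply have vanishing orbital integral. Second, your cohomological heuristic ($\tfrac1d\sum_i\theta^i$ integral, hence $H^1(\langle\theta\rangle,\cdot)$ vanishes on $\O_E$-lattices) is the right intuition for the identity $K_L\rtimes\theta={\rm tc}(K_L,K_{L,\theta})$, but it must be implemented at the level of the twisted conjugation map on the group, not just on lattices; this is exactly what the approximation argument in the proof of Theorem \ref{mainSSD}-(2) does, and without it (or something equivalent) the volume comparison ${\rm vol}^{-1}(K_E(m)_\theta)$ versus ${\rm vol}^{-1}(K_E(m))$ in the matching cannot be justified.
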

This will be applied to the endoscopic classification of Siegel cusp forms 
(see Section 5 of \cite{KWY}) to make a detour considering quasi-split but not split 
orthogonal groups in Arthur's classification. 
\begin{remark}
For a positive integer $m$, we write $m=ke+r$ with unique integers $k\ge 0$ and 
$0\le r<e$. 
Then we see easily that 
$$K_E(m)_\theta=
\left\{\begin{array}{cl}
K_F(k+1) & (\text{if $r\neq 0$}) \\
K_F(k) &  (\text{if $r=0$})
\end{array}\right..$$ 
 
\end{remark}
The following theorem is a more finer version of matching orbital integrals which is an extension of 
Proposition 3.1, p. 20 of \cite{AC} for the Hecke elements in Theorem \ref{main1}. 
\begin{thm}\label{main2} Keep the notation and the assumption on $F$ and $E/F$ 
in Theorem \ref{main1}. 
Let $m$ be a positive integer. 
Suppose that the residual characteristic $p$ of $F$ is odd and prime to $d=[E:F]$. 
Put $f^H:=
{\rm vol}^{-1}(K_E(m)_\theta)\textbf{1}_{K_E(m)_\theta}$  in $C^\infty_c(H(F))=
C^\infty_c(GL_n(F))$ and $f^G:={\rm vol}^{-1}(K_E(m))\textbf{1}_{K_E(m)}$ in 
$C^\infty_c(G(F))=C^\infty_c(GL_n(E))$. 
Put $\wf^G={\rm vol}^{-1}(K_E(m))\textbf{1}_{K_E(m)\rtimes\theta}$ in 
$C^\infty_c(\tG(F))$ which corresponds to $\tf^G$ under $G(F)\stackrel{\sim}{\lra}
\tG(F),\ g\mapsto g\rtimes \theta ($see Section \ref{tc} for the twisted space $\tG)$. 
For a semisimple element $\gamma\in H(F)$ and 
a semisimple element $\delta\in G(F)$,  
we have the matching of normalized orbital integrals
$$
I_{\gamma}(f^H,dh,dh_\gamma)=
\left\{\begin{array}{cl} 
I_{\delta\rtimes \theta}(\wf^G,dg,dg_{\delta\rtimes\theta}) 
& \text{if $\gamma$ is a norm of $\delta$} \\
0  & \text{if $\gamma$ is not a norm}
\end{array}\right..
$$
See $($\ref{noiG1}$)$, $($\ref{noiG2}$)$, and $($\ref{noiH}$)$ for 
the normalized orbital integrals. 
\end{thm}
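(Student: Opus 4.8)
The plan is to follow the strategy emphasized in the abstract: reinterpret base change for $\mathrm{Res}_{E/F}GL_n$ as a case of twisted endoscopy and reduce the equality of normalized orbital integrals to a statement about descent in the twisted space $\tG$, following Waldspurger's ``L'endoscopie tordue n'est pas si tordue'' as adapted by Ganapathy--Varma to the setting needed here. First I would recall that the matching of orbital integrals is insensitive to the choice of test functions within a transfer class, so it suffices to prove the identity for the specific pair $(\wf^G, f^H)$ attached to principal congruence subgroups. For a semisimple $\delta \in G(F)$ that has a norm $\gamma \in H(F) = GL_n(F)$, I would use the known correspondence of semisimple conjugacy classes (norm map) together with the identification of centralizers: the $\theta$-twisted centralizer $G_{\delta\theta}$ is an inner form of (in fact, after the reductions here, isomorphic to) the centralizer $H_\gamma$, so the quotient measures $dg_{\delta\rtimes\theta}$ and $dh_\gamma$ can be matched compatibly with the fixed global measures $dg, dh$. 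This reduces the problem to computing both normalized twisted and ordinary orbital integrals explicitly.

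The heart of the argument is the explicit computation of $I_{\delta\rtimes\theta}(\wf^G, dg, dg_{\delta\rtimes\theta})$ for $\wf^G = \mathrm{vol}^{-1}(K_E(m))\mathbf{1}_{K_E(m)\rtimes\theta}$. Here I would invoke the twisted descent: near a semisimple point, the twisted orbital integral on $\tG$ localizes, via a $\theta$-twisted version of the exponential/Cayley map, to an ordinary orbital integral on the descended group $G_{\delta\theta}$ with respect to the descended test function, which is again (up to a computable volume factor) the characteristic function of a principal congruence subgroup of $G_{\delta\theta}(F)$ — this is exactly the point where the hypothesis that $p$ is odd and prime to $d=[E:F]$ is used, since it guarantees that $\theta$ acts ``tamely'' enough that $K_E(m)$ and its $\theta$-fixed part behave well under descent and that $K_E(m)_\theta$ is as described in the Remark. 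On the $H$ side, $I_\gamma(f^H, dh, dh_\gamma)$ with $f^H = \mathrm{vol}^{-1}(K_E(m)_\theta)\mathbf{1}_{K_E(m)_\theta}$ descends, by the ordinary (Arthur--Clozel, Proposition 3.1 of \cite{AC}) argument, to the analogous integral on $H_\gamma(F)$. Since $G_{\delta\theta} \cong H_\gamma$ and the two descended functions are characteristic functions of the same principal congruence subgroup up to matching volume normalizations, the two integrals coincide. When $\gamma$ is not a norm of any $\delta$, the twisted orbital integral has empty support contribution and the standard argument (again as in \cite{AC}) forces $I_\gamma(f^H,\cdot) = 0$; concretely one shows the conjugacy class of $\gamma$ does not meet the support $K_E(m)_\theta$ in a way compatible with being a norm, using that norms are detected by characteristic polynomials and the congruence conditions are $\theta$-stable.

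I would organize the writeup as: (1) recall the twisted space $\tG$, the norm correspondence, and the normalized orbital integrals from the cited equations; (2) match the centralizers and measures; (3) prove the twisted descent lemma for $\wf^G$, identifying the descended function and volume factor, with careful bookkeeping of the ramification index $e$ and the residue-characteristic hypothesis; (4) do the parallel ordinary descent for $f^H$ via \cite{AC}; (5) compare and conclude, treating the non-norm case separately. The main obstacle I expect is step (3): making the twisted descent completely explicit — in particular controlling how $K_E(m)$ interacts with the $\theta$-twisted Cayley transform and verifying that the descended object is exactly a principal congruence subgroup of $G_{\delta\theta}(F)$ with the claimed volume, uniformly in $\delta$ and $m$, without the tameness furnished by $\gcd(p,d)=1$ breaking down. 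This is where the arguments of Waldspurger and Ganapathy--Varma must be invoked with care, and where the bulk of the genuinely new work in the theorem lies.
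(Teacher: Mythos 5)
There is a genuine gap: the step you yourself flag as ``the heart'' --- your step (3), a twisted descent \emph{at a general semisimple point} $\delta$ onto the twisted centralizer $G_{\delta\theta}$, with an explicit identification of the descended test function as the characteristic function of a principal congruence subgroup of $G_{\delta\theta}(F)$, uniformly in $\delta$ and $m$ --- is never carried out, and it is not the mechanism the paper uses. The paper avoids any descent at general $\delta$ by first exploiting the support of the test functions: $f^H$ and $\wf^G$ are supported on topologically unipotent elements, so one may assume $\gamma\in H(F)_{\rm tu}$; then Lemma \ref{cprime} ($d$-th roots exist and are unique in $H(F)_{\rm tu}$ since $p\nmid d$) produces $\gamma'$ with $N_{E/F}(\gamma')=\gamma'^d=\gamma$, and Lemma \ref{bijection} together with Lemma 1.1(ii) of \cite{AC} shows every relevant $\delta$ is $\theta$-conjugate to $\gamma'$, which is then replaced by $\gamma$ itself. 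After this reduction the only ``descent'' needed is at the single element $\theta$, i.e.\ Theorem \ref{mainSSD}, whose content is the group-theoretic identity $K_E(m)\rtimes\theta={\rm tc}\bigl(K_E(m),K_E(m)_\theta\bigr)$ proved by the Cayley transform and the eigenspace decomposition of $d\theta$ (the units $1-\zeta_d^{n_i}$, $p\nmid d$); moreover this descent goes from $\tG$ to the fixed group $H=G^\theta$ (with $G_{\gamma\rtimes\theta}=H_\gamma$ on both sides), not to the centralizer torus as in your plan. Without the topological-unipotence reduction, your uniform-in-$\delta$ descent claim is exactly the unproved hard point, so the proposal does not yet constitute a proof.

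Two smaller issues: your opening reduction (``matching is insensitive to the choice of test functions within a transfer class'') is vacuous here, since the theorem is precisely the identity for this specific pair, and Proposition 3.1 of \cite{AC} is the unramified analogue of the statement, not a descent tool for the $H$-side integral. For the non-norm case, the correct mechanism is that every topologically unipotent semisimple $\gamma$ \emph{is} a norm (of its $d$-th root), so a non-norm $\gamma$ cannot be conjugate into the support $K_E(m)_\theta\subset H(F)_{\rm tu}$ and $I_\gamma(f^H,dh,dh_\gamma)=0$ by support alone; your appeal to characteristic polynomials and $\theta$-stable congruence conditions gestures at this but does not pin down the key point.
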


\begin{remark}\label{on-assumption}
Our assumption on the residual characteristic $p$ is crucial in the following points:
\begin{enumerate}
\item being odd is necessary to guarantee (\ref{homeo});
\item being prime to $d=[E:F]$ is necessary in the computation 
done in Section \ref{tc} through Section \ref{sdt}  around 
topologically unipotent elements. In particular Theorem \ref{mainSSD} is 
important among others. 
\end{enumerate}
\end{remark}

\begin{remark}\label{current-situation-for-BC}
\begin{enumerate}
\item Thanks to works \cite{HI}, \cite{CR} with the results in this paper, 
an explicit transfer theorem for any base change lifts and 
automorphic inductions has been developed  
in great general so that we can apply these results to several arithmetic problems.  
\item Explicit transfer theorems for various settings of twisted endoscopy data, 
based on Lie-algebra calculation, are now available due to important works \cite{Fe}, \cite{GV}, and \cite{Oi} which embody  Waldspurger's philosophy in \cite{Wal-book} saying 
``Twisted endoscopy is not so twisted''. 
\end{enumerate}
\end{remark}

We give a global application. Let $K$ be a number field and $L/K$ be a cyclic extension of 
 degree $d$. 
By abusing notation, let $\theta$ be a generator of ${\rm Gal}(L/K)$.  
Let $\pi=\pi_f\otimes \pi_\infty$ be a cuspidal automorphic representation of $GL_n(\A_K)$ 
where $\pi_f$ stands for the finite part of $\pi$. 
We denote by $\Pi:={\rm BC}(\pi)=\Pi_f\otimes \Pi_\infty$ 
the base change lift of $\pi$ to 
$GL_n(\A_L)$ (see Chapter 3, Section 6 of \cite{AC}).  
Let $S_K$ be the set of all finite places $v$ of $K$ such that $L/K$ is ramified at $v$. 
Let $S_L$ be the set of all finite places $w$ of $L$ lying over some place in $S_K$.  
Let $S$ be a finite set of finite places of $L$ including $S_L$. 
For each $\underline{n}=(n_w)_{w\in S\setminus S_L}\in \Z^{|S\setminus S_L|}_{\ge 1}$ 
and  each $\underline{m}=(m_w)_{w\in S_L}\in \Z^{|S_L|}_{\ge 1}$, 
put 
$$U_{S}(\underline{n},\underline{m}):=
\ds\prod_{w\in S\setminus S_L}K_{L_w}(n_w)\times 
\prod_{w\in S_L}K_{L_w}(m_w)\times 
\prod_{w\not\in S}GL_n(\mathcal{O}_{L_w})$$ 
which is an open compact subgroup of $GL_n(\A^{(\infty)}_L)$ 
where $\A^{(\infty)}_L$ stands for the ring of finite adeles of $L$. 
Let $U_{S}(\underline{n},\underline{m})_\theta$ be the $\theta$-fixed 
part of $U_{S}(\underline{n},\underline{m})$ which can be regarded as an 
open compact subgroup of $GL_n(\A^{(\infty)}_K)$.  
\begin{thm}\label{main3}
Assume $\Pi$ is cuspidal and $U_{S}(\underline{n},\underline{m})$ is stable under $\theta$. 
Assume that for each $v\in S_K$ the residual characteristic $p_v$ is odd and 
prime to $d_v=[L_w:K_v]$ for any finite place $w$ of $L$ lying over $v$. It holds that   
$${\rm dim}(\pi^{U_{S}(\underline{n},\underline{m})_\theta}_f)
\le  {\rm dim }(\Pi^{U_{S}(\underline{n},\underline{m})}_f).$$
\end{thm}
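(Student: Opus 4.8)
The plan is to write both sides as Euler products and to reduce, place by place, to the local transfer theorems~\ref{main1} and \ref{main2}. Since $\pi_f=\bigotimes'_v\pi_v$ and $\Pi_f=\bigotimes'_w\Pi_w$ are restricted tensor products over the finite places of $K$, resp.\ of $L$, and $U:=U_S(\underline n,\underline m)$ and $U_\theta:=U_S(\underline n,\underline m)_\theta$ are products of local open compact subgroups, one has $\dim\pi_f^{U_\theta}=\prod_v\dim\pi_v^{(U_\theta)_v}$ and $\dim\Pi_f^{U}=\prod_v\prod_{w\mid v}\dim\Pi_w^{U_w}$, with almost all factors equal to $1$; so it suffices to prove, for every finite place $v$ of $K$, the local identity $\dim\pi_v^{(U_\theta)_v}=\prod_{w\mid v}\dim\Pi_w^{U_w}$. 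Two facts set this up. First, by the compatibility of global and local base change (Theorem~5.1 of \cite{AC}), $\Pi_w=\mathrm{BC}_{L_w/K_v}(\pi_v)$ for each $w\mid v$. Second, $\Gal(L/K)$ permutes $\{w\mid v\}$ transitively, so fixing $w_0\mid v$ with decomposition group $\Gal(L_{w_0}/K_v)$, the diagonal-type embedding identifies $(U_\theta)_v$ with $U_{w_0}^{\,\Gal(L_{w_0}/K_v)}$, which by the Remark after Theorem~\ref{main1} is a principal congruence subgroup of $\GL_n(K_v)$; thus the problem at $v$ is an instance of the local situation for the single cyclic extension $L_{w_0}/K_v$.

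For the places where nothing happens the identity is the classical one. If $v\notin S_K$ and no place of $L$ above $v$ lies in $S$, then $U_w=\GL_n(\mathcal{O}_{L_w})$ for all $w\mid v$ and $(U_\theta)_v=\GL_n(\mathcal{O}_{K_v})$; since $L_w/K_v$ is unramified, $\pi_v$ has a spherical vector if and only if every $\Pi_w$ does, in which case all factors on both sides are $1$, and otherwise all vanish. This is exactly the unramified matching recorded by Kottwitz (p.~239 of \cite{Ko}), so only the finite set of $v$ lying in $S_K$ or below $S$ remains; for these the residue characteristic is odd and prime to the local degree $d_v=[L_{w_0}:K_v]$, so Theorems~\ref{main1}--\ref{main2} apply to $L_{w_0}/K_v$.

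At such a $v$ I would pass from the matching of (twisted) orbital integrals in Theorem~\ref{main2} to its spectral incarnation: for $\GL_n$ the twisted endoscopic group is again $\GL_n$, so by the duality between orbital integrals and (twisted) characters — Weyl and twisted Weyl integration, density of regular semisimple (twisted) orbital integrals, and the local cyclic base change character identity in the spirit of \cite{AC}, \cite{Wal-book} — the pair $(f^H,\wf^G)$ of Theorem~\ref{main2} satisfies $\tr\,\pi_v(f^H)=\tr\big(\Pi_{w_0}(\wf^G)\big)$ for every irreducible admissible $\pi_v$, where the right-hand side is the twisted character attached to the canonically normalised intertwiner of $\Pi_{w_0}=\mathrm{BC}_{L_{w_0}/K_v}(\pi_v)$. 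Plugging in the explicit Hecke idempotents $f^H=\vol^{-1}(K_E(m)_\theta)\textbf{1}_{K_E(m)_\theta}$ and $\wf^G=\vol^{-1}(K_E(m))\textbf{1}_{K_E(m)\rtimes\theta}$, with $K_E(m)$ the principal congruence subgroup of level $m=m_{w_0}$ in $\GL_n(L_{w_0})$, the left side collapses to $\dim\pi_v^{(U_\theta)_v}$, and the right side unwinds into a twisted-character value which, by the local structure of base change lifts established in the paper — Theorems~\ref{main1}, \ref{main2}, together with the semisimple-descent analysis culminating in Theorem~\ref{mainSSD} — equals $\prod_{w\mid v}\dim\Pi_w^{U_w}$. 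Assembling these local identities over all $v$ yields the theorem.

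The step I expect to be the main obstacle is precisely this last one: converting the transfer of Hecke elements, which a priori only controls orbital integrals, into the exact count of fixed vectors on the $L$-side. This requires fixing all Haar-measure normalisations and the normalisation of the twisted intertwiner so that the base change character identity holds on the nose for possibly ramified cyclic extensions, carrying out the descent from the twisted space $(\mathrm{Res}_{L/K}\GL_n)(K_v)\rtimes\theta$ to the twisted space of the single extension $L_{w_0}/K_v$, and then performing the fixed-vector computation itself — where the semisimple-descent machinery of the paper around topologically unipotent elements (hence the hypothesis that $p_v$ is odd and prime to $d_v$) does the real work.
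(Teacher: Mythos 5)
Your route is genuinely different from the paper's, and the difference is exactly where the gap sits. The paper's proof is global and short: since $\pi$ is cuspidal, $\Pi$ occurs in the discrete spectrum, and the two normalized characteristic functions $f_1,f_2$ are plugged directly into the discrete part of the Arthur--Clozel twisted trace formula comparison; Theorem \ref{main2} (plus Kottwitz's unramified fundamental lemma at the remaining places) supplies the geometric matching, and the resulting identity $\mathrm{tr}\,\pi(f_1)=\mathrm{tr}\,\Pi(f_2)$ gives the dimension count. You instead localize completely, and the step you yourself flag as the ``main obstacle'' is precisely the missing one, and it is not supplied by anything in the paper's local results: Theorem \ref{main2} controls twisted orbital integrals, and even after Weyl integration and density convert it into a character statement, what you obtain at a place is $\mathrm{tr}\,\pi_v(f^H)=\mathrm{tr}\bigl(\Pi_{w_0}(f^G)\circ I_\theta\bigr)$ for some normalization of the intertwiner $I_\theta$; with $f^G$ the normalized indicator of $K_E(m)$, the right-hand side is $\mathrm{tr}\bigl(I_\theta\,\big|\,\Pi_{w_0}^{K_E(m)}\bigr)$, not $\dim\Pi_{w_0}^{K_E(m)}$. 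You would have to show that the normalization forced by the base change character identity makes $I_\theta$ act as the identity (not merely as a finite-order operator) on that fixed space, and neither Theorem \ref{main1}, Theorem \ref{main2} nor Theorem \ref{mainSSD} gives this. The paper's global argument is designed to bypass exactly this local normalization problem: the $\theta$-action on the automorphic realization of $\Pi$ is canonical and the discrete-part identity is used wholesale, at the cost of needing cuspidality of $\pi$ and the isolation of the $\pi$/$\Pi$ terms, which is what \cite{AC} (and \cite{J}) are cited for.

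There is a second defect in the Euler-product reduction itself. At a place $v$ with several primes $w\mid v$, the group $(U_\theta)_v$ is the intersection inside $GL_n(K_v)$ of (the images of) all the $U_w$ — the deepest level when the $n_w$ differ — not $U_{w_0}^{\Gal(L_{w_0}/K_v)}$ alone; and, more seriously, the twisted-character mechanism you invoke cannot produce the product $\prod_{w\mid v}\dim\Pi_w^{U_w}$: a twisted trace on $\prod_{w\mid v}GL_n(L_w)$ collapses the tensor factors, replacing $\otimes_{w\mid v}\mathbf{1}_{U_w}$ by a single convolution, so it yields one dimension factor attached to the single extension $L_{w_0}/K_v$, not one factor for each $w\mid v$. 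Concretely, at a place of $K$ split in $L$ with two primes above it carrying equal level $n\ge 1$, the local identity you propose reads $D=D^2$ with $D=\dim\pi_v^{K_{K_v}(n)}$, which fails as soon as $D\ge 2$. So the place-by-place statement must be formulated for the single local cyclic extension $L_{w_0}/K_v$ and the convolution of the local test functions (as in \cite{AC} at split places) and then reassembled globally — which is, in effect, what the paper's trace-formula proof does instead of asserting local dimension identities.
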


This paper is organized as follows. 
In Section \ref{tes}, according to the notation of \cite{KS}, we interpret base change lifts 
in terms of the twisted endoscopic lifts which has been itself well-known and not new at all. 
Since the author could not find any suitable reference, we state it more than necessary. 
In Section \ref{Norm} we recall the norm map (in Section 3 of \cite{KS}) for the base change lifts, equivalently, for  our 
twisted endoscopic data. The author would find a related explanation in Section 1 of \cite{AC}. However, we fit into the notation in \cite{KS} 
as much as possible. In Section \ref{tc} through Section \ref{oi} we study 
basic facts about  (normalized) orbital integrals and recall the definition of 
semi-simple descent. 
We largely follow the argument in Ganapathy-Varma's paper \cite{GV}. 
In Section \ref{tf}, we compute the transfer factor $\Delta^{{\rm IV}}$ with 
``upper IV''. 
The author really owe Professor Waldspurger a lot with email correspondences  in the computation of $\Delta^{{\rm IV}}$. 
A key in our matching theorem is the semi-simple descent theorem whose proof 
is devoted to Section \ref{sdt}. 
In Section \ref{proof}, we give proofs of main theorems.

\textbf{Acknowledgments.}
This work has been raised toward an application to 
equidistribution theorems for $Sp_{2n}$ in \cite{KWY}. 
I really appreciate my collaborators (H-H .Kim ans S. Wakatsuki) to 
encourage me to prove the transfer theorem here.  
The author would also like to thank 
professors M. Oi, J-L. Waldspurger, and S. Yamana for helpful discussions. 
Professor Oi explained me about the importance of Ganapathy-Varma's paper \cite{GV} 
and also how the work of Waldspurger in 
the book \cite{Wal-book} should be understood. 
The special thanks are given to Professor Waldspurger again for explaining the author 
about how to compute the transfer factors in our setting. 
Finally the author would like to express his sincere gratitude to the anonymous referees for their carefully reading of this paper and for giving the author a lot of constructive
and valuable comments. In particular, the statement of Theorem \ref{main3} and 
its proof in an earlier version were collected according to the referee's suggestion. 

\section{A twisted endoscopic subgroup of $G={\rm Res}_{E/F}(GL_n/E)$}\label{tes}
Keep the notation in the previous section. We follow 
the notation in Chapter 2 of \cite{KS}. For any algebraic group $\mathcal{G}$ over 
a field, we denote by $\mathcal{G}^0$ the connected component of $\mathcal{G}$. 
Recall the connected reductive group $G={\rm Res}_{E/F}(GL_n/E)$ over $F$ 
which is quasi-split over $F$ since ${\rm Res}_{E/F}(B/E)$ is 
a Borel subgroup defined over $F$ where $B$ stands for the upper Borel subgroup of 
$H=GL_n/F$ and $B/E$ does for its base change to $E$. Note that it is an outer form of $(GL_n/F)^d$. We fix the quasi split datum ${\rm spl}_G$ associated to 
${\rm Res}_{E/F}(B/E)$ and its maximal 
torus $T$. 
We fix an isomorphism $\iota:{\rm Gal}(E/F)\stackrel{\sim}{\lra} \Z/d\Z,\ \sigma\mapsto \iota(\sigma)$ and extend it to the Weil group $W_F$ of $F$ via 
the natural projection $W_F\lra W_F/W_E\simeq {\rm Gal}(E/F)$. 
We also denote it by $\iota$ again. 
The Langlands dual group of $G$ is given by 
$\widehat{G}=\ds\prod_{i\in \Z/d\Z}GL_n(\C)$ with the action of $W_F$:
\begin{equation}\label{lga}
\sigma(g)=(g_{i+\iota(\sigma)})_{i\in \Z/d\Z},\ \sigma\in W_F,\ g=(g_i)_{i\in \Z/d\Z}\in \widehat{G}.
\end{equation}
It makes up the $L$-group ${}^L G:=\widehat{G}\rtimes W_F$. 
Let $\theta$ be a generator of $ {\rm Gal}(E/F)$. 
For any $F$-algebra $R$, the action of $\theta$ on $E\otimes_FR$ is given by 
$\theta(e\otimes r)=\theta(e)\otimes r$. 
It naturally induces an automorphism over $F$ of $G$ which is denoted by $\theta$ again. 
We denote by $\widehat{\theta}$ the automorphism of 
$\widehat{G}$ induced from the action of $\theta$ on $\widehat{G}$. 
The automorphism $\widehat{\theta}$ is acting on $\widehat{G}$ by 
$\widehat{\theta}((g_i)_{i\in\Z/d\Z})=(g_{i+\iota(\theta)})_{i\in \Z/d\Z}$ as in (\ref{lga}). Put ${}^L\theta=\widehat{\theta}\rtimes {\rm id}_{W_F}$. 
We consider the endoscopic datum $(G,\theta,\textbf{1}_{{\rm triv}})$ where 
$\textbf{1}_{{\rm triv}}$ is the trivial class in the Galois cohomology 
$H^1(W_F,Z(\widehat{G}))\simeq H^1(W_E,\C^\times)=
{\rm Hom}_{{\rm ct}}(W_E,\C^\times)$. Here the subscript ``ct'' means 
continuous. The trivial class $\textbf{1}_{{\rm triv}}$ can be regarded as 
the trivial character $W_F\lra Z(\widehat{G})\subset \widehat{G}\subset {}^L\widehat{G}$ 
via the transfer map $W^{{\rm ab}}_F\lra W^{{\rm ab}}_E$. Further we extend it 
to $\mathcal{H}:=GL_n(\C)\rtimes W_F$ via the natural projection $\mathcal{H}\lra W_F$ 
where $W_F$ acts trivially on $GL_n(\C)$ in the definition of $\mathcal{H}$. 
We write $\textbf{1}_{{\rm triv}}:\mathcal{H}\lra {}^L\widehat{G}$ for 
the resulting trivial character.   

In view of the base change, the endoscopic datum $(H,\mathcal{H},s,\xi)$ for 
$(G,\theta,\textbf{1}_{{\rm triv}})$  consists of 
\begin{itemize}
\item $H=GL_n/F$ with the Langlands dual $\widehat{H}=GL_n(\C)$;
\item $\mathcal{H}=GL_n(\C)\rtimes W_F$;
\item $s=(1)_{i\in \Z/d\Z}\in \widehat{G}$ where $1$ stands for the unit element 
in $GL_n(\C)$;
\item $\xi:\mathcal{H}\lra {}^L G,\ h\rtimes w\mapsto \diag(h)\rtimes w$ where 
$\diag:GL_n(\C)\lra (GL_n(\C))^d$ is the diagonal embedding. 
\end{itemize}
Clearly, for each $h\rtimes w\in \mathcal{H}$, 
$${\rm Int}(s)\circ {}^L\theta\circ \xi(h\rtimes w)=\widehat{\theta}(\diag(h))\rtimes w
=\diag(h)\rtimes w=\xi(h\rtimes w)=\textbf{1}_{{\rm triv}}(h\rtimes w)\cdot\xi(h\rtimes w).$$
The computation $${\rm Cent}_{\widehat{\theta}}(s,\widehat{G}):=\{g\in\widehat{G}\ |\  
gs\widehat{\theta}(g)^{-1}=s  \}=
\{g\in\widehat{G}\ |\  
\widehat{\theta}(g)=g  \}=\{\diag(h)\ |\ h\in GL_n(\C)\}$$
shows $\xi|_{\widehat{H}}:\widehat{H}\stackrel{\sim}{\lra}{\rm Cent}_{\widehat{\theta}}(s,\widehat{G})$. Therefore, (2.1.4a) and (2.1.4b) in p.17, Chapter 2 of \cite{KS} are satisfied. 
Further, 
$\xi(Z(\widehat{H})^{W_F})^0=\{\diag(a\cdot 1)\ |\ a\in \C^\times\}$ 
is a subset of $Z(\widehat{G})=\{(a_1\cdot 1,\ldots,a_d\cdot 1)\in \widehat{G}\ |\ a_1,\ldots,a_d\in \C^\times\}$. In conclusion, $(H,\mathcal{H},s,\xi)$ is an elliptic, 
twisted endoscopic datum for $(G,\theta,\textbf{1}_{{\rm triv}})$ and in fact, 
it is easy to check that any elliptic, 
twisted endoscopic datum for $(G,\theta,\textbf{1}_{{\rm triv}})$ is 
isomorphic to our datum. Throughout this paper, we consider only this 
endoscopic datum $(H,\mathcal{H},s,\xi)$. 

In view of a global application, 
being elliptic is related to the discrete spectrum of the $L^2$-space of 
automorphic forms in \cite{AC}. 

\section{The norm map} \label{Norm}
Let us keep the notation in the previous section. 
In this section we recall the norm map for our endoscopic datum $(H,\mathcal{H},s,\xi)$ for 
$(G,\theta,\textbf{1}_{{\rm triv}})$. We follow the notation in Chapter 3 of \cite{KS}. 
Let $Cl_{{\rm ss}}(H)$ be the set of semisimple conjugacy classes in 
$H(\oF)=GL_n(\oF)$ and write its element by 
$[\gamma],\ \gamma\in H(\oF)$.  
Two elements $g_1,g_2\in G(F)=GL_n(E)$ are said to be $\theta$-conjugate if 
there exists an element $g\in G(F)$ such that $g_1=g^{-1}g_2\theta(g)$.  
Let $Cl_{{\rm ss}}(G,\theta)$ be 
the set of $\theta$-semisimple $\theta$-conjugacy classes in $G(\oF)=
GL_n(E\otimes_F\oF)$ (see Definition \ref{image}-(2)). 
We shall describe the canonical map 
$\mathcal A_{H/G}:Cl_{{\rm ss}}(H)\lra Cl_{{\rm ss}}(G,\theta)$ 
in Theorem 3.3.A, p.27 of \cite{KS}. 
Let $(B_H,T_H)$ be a pair of the upper Borel subgroup of $H$ and the diagonal maximal 
torus $T_H$ in $B_H$. Let $\Omega_H=N_{H}(T_H)/T_H$ 
be the Weyl group of $H$ with respect to $(B_H,T_H)$ and it acts on 
$T\simeq (GL_1/F)^n$ as the permutation of the entries via the natural identification 
$\Omega_H\simeq \mathfrak{S}_n$.  
Pick a class $C\in Cl_{{\rm ss}}(H)$. By taking the Jordan normal form, 
we see that $C=[t_C]$ for some $t_C\in T_H(\oF)$. 
Therefore, we have a bijection 
\begin{equation}\label{h-conj}
Cl_{{\rm ss}}(H)\lra T_H(\oF)/\Omega_H,\ C\mapsto \widetilde{t}_C 
\end{equation}
where $\widetilde{t}_C$ is the class of $t_C$ in $T_H(\oF)/\Omega_H$. 
Let $T={\rm Res}_{E/F}(T_H/E)$ be the maximal torus in 
the Borel subgroup $B={\rm Res}_{E/F}(B_H/E)$ in $G$. Recall that 
$\theta$ acting on $G$ is quasi-semisimple, hence,  
$T$ and $B$ are stable under $\theta$. 
We define the map $1-\theta:T\lra T,\ t\mapsto t\cdot \theta(t)^{-1}$. 
Then we have an isomorphism 
\begin{equation}\label{tt}
T_\theta(\oF):=T(\oF)/(1-\theta)T(\oF)\lra T_H(\oF),\ (t_1,\ldots,t_d)\mapsto 
t_1\cdots t_d.
\end{equation}
where we use a fixed identification $T(\oF)\simeq (T_H(\oF))^d$ 
so that 
$\theta$ acts on $(T_H(\oF))^d$ by the shift operator $(t_1,t_2,\ldots,t_d)\mapsto (t_2,\ldots,t_d,t_1)$.  
Let $\Omega=N_G(T)/T$ be the Weyl group of $G$ 
with respect to $(B,T)$. As mentioned before, $\theta$ acts on $T$. 
If we fix an identification $\Omega\simeq \mathfrak{S}^d_n$, then 
the $\theta$-fixed part $\Omega^\theta$ is identified with the diagonal part 
$\Delta \mathfrak{S}_n:=\{(\sigma,\ldots,\sigma)\ |\ \sigma\in \mathfrak S_n \}$ of 
$\mathfrak{S}^d_n$. 
The map (\ref{tt}) naturally yields an isomorphism 
\begin{equation}\label{nn}
\widetilde{N}_{E/F}:T_\theta(\oF)/\Omega^\theta\lra T_H(\oF)/\Omega_H.
\end{equation}
Pick an element $C$ in $Cl_{{\rm ss}}(G,\theta)$. 
By Lemma 3.2.A of \cite{KS}, the image of $C\cap T(\oF)$ in $T_\theta(\oF)$ is 
a single oribit of $\Omega^\theta$ and we write such an orbit as 
$\Omega^\theta \widetilde{s}_C,\  \widetilde{s}_C\in T_\theta(\oF)$ 
for $C$.  
Thus we have a bijection 
\begin{equation}\label{g-conj}
Cl_{{\rm ss}}(G,\theta)\lra T_\theta/\Omega^\theta,\ C\mapsto \widetilde{s}_C.
\end{equation}
Putting everything together, we have 
\begin{equation}\label{norm}
\mathcal{A}_{H/G}:Cl_{{\rm ss}}(H)\stackrel{(\ref{h-conj}) \atop\sim}{\lra} 
 T_H(\oF)/\Omega_H
\stackrel{\widetilde{N}^{-1}_{E/F}}{\lra} 
T_\theta(\oF)/\Omega^\theta
\stackrel{(\ref{g-conj}) \atop\sim}{\lla} 
Cl_{{\rm ss}}(G,\theta).
\end{equation} 
\begin{Def}\label{image}
\begin{enumerate}\item
For a semisimple element $\gamma\in H(F)=GL_n(F)$ we say $\gamma$ is a norm of $\delta\in G(F)=GL_n(E)$ 
if $\delta$ lies in $\mathcal A_{H/G}([\gamma])\cap G(F)$ where $[\gamma]$ 
stands for a class of $\gamma$ in $Cl_{{\rm ss}}(H)$.  
Otherwise, we say $\gamma$ is not a norm of $\delta$. 
\item We say $\delta\in G(F)$ is $\theta$-semisimple if there exists 
$g\in G(F)$ such that $g^{-1}\delta \theta(g)$ lies in $T(F)=T_H(E)$.  
This is equivalent to enjoying the condition explained in Section 1.3,  p.15 of \cite{KS}.  
\item We say a $\theta$-semisimple element is $\theta$-regular if 
$G_{\delta\rtimes \theta}^0={\rm Cent}_{\theta}(\delta,G)^\circ$ is a torus. 
and strongly $\theta$-regular if ${\rm Cent}_{\theta}(\delta,G)$ is abelian. 
In the latter case, the centralizer of $G_{\delta\rtimes \theta}$ in $G$ is 
a maximal torus stable under ${\rm Int}(\delta)\circ \theta$. 
Here $G_{\delta\rtimes \theta}$ is an $F$-algebraic group defined by the functor 
from $F$-algebras to sets;
$$\underline{G}_{\delta\rtimes \theta}:\{F-algebras\}\lra Sets$$
$$R\mapsto \underline{G}_{\delta\rtimes \theta}(R)=
G_{\delta\rtimes \theta}(R):=
\{g\in G(R)=GL_n(E\otimes_F R)\ |\ g^{-1}\delta \theta(g)=\delta\}$$ 
where the action  of $\theta$ on $g$ is induced by the action of $\theta$ on 
$E\otimes_F R$ given by $\theta(e\otimes r)=\theta(e)\otimes r$. 
\item We say a semisimple element $\gamma\in H(F)=GL_n(F)$ is 
strongly $G$-regular if it is a norm of a strongly $\theta$-regular element of $G(F)$. 
\end{enumerate}
\end{Def}
\begin{remark}\label{diagonal}
Fix an isomorphism $T(\oF)\simeq (T_H(\oF))^d$ as before. 
If $\diag(s_1,\ldots,s_n)\in T_H(F)\subset GL_n(F)$ is a norm of 
$\delta=\diag(t_1,\ldots,t_n)\in T(F)=T_H(E)\subset GL_n(E)$, then as a multi-set  
$$\{N_{E/F}(t_1),\ldots,N_{E/F}(t_n)\}=\{s_1,\ldots,s_n\}$$
where $N_{E/F}:E\lra F$ is the usual norm map. 
\end{remark}

\section{The twisted space and the twisted conjugation map }\label{tc}
In this section we follow the notation in Section 3 and 4 of \cite{GV}. 
Let $p$ be the residual characteristic of $F$. 
For any reductive group $\mathcal{G}$ over $F$, we say 
an element of $g$ in $\mathcal{G}(F)$ is topologically unipotent if 
$\ds\lim_{n\to\infty}g^{p^n}=1$. Similarly, for an element 
$X$ in the Lie algebra  ${\rm Lie}(\mathcal{G}(F))$ of $\mathcal{G}(F)$, 
we say $X$ is topologically nilpotent if  $\ds\lim_{n\to\infty}X^{p^n}=0$. 
We denote by $\mathcal{G}(F)_{{\rm tu}}$ (resp. 
${\rm Lie}(\mathcal{G}(F))_{{\rm tn}}$) the set of all topologically unipotent (resp. 
nilpotent) 
elements in $\mathcal{G}(F)$ (resp. ${\rm Lie}(\mathcal{G}(F))$).  

Let us return to our setting. 
Define the twisted space $\tG=G\rtimes \theta$ which is an $F$-variety. 
We have $G\stackrel{\sim}{\lra}\tG,\ g\mapsto g\rtimes \theta$ as an $F$-variety. 
We also define the twisted conjugation map:
\begin{equation}\label{twcm}
{\rm tc}:G(F)\times H(F)\lra \tG(F),\ (g,h)\mapsto g(h\rtimes \theta)
g^{-1}=(gh\theta(g)^{-1})\rtimes\theta
\end{equation}
and put $\mathcal{U}:={\rm tc}(G(F)\times H(F)_{{\rm tu}})$.    
\begin{lem}\label{bijection}Keep the notation being as above. 
Assume the residual characteristic $p$ of $F$ is prime to $d:=[E:F]$. 
The map $H(F)_{{\rm tu}}\lra \mathcal{U},\ h\mapsto h\rtimes \theta$ induces 
a bijection from 
\begin{itemize}
\item the set of $H(F)$-conjugacy classes in  $H(F)_{{\rm tu}}$ to
\item the set of $G(F)$-conjugacy classes in  $\mathcal{U}$.
\end{itemize}
\end{lem}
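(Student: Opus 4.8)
The plan is to prove the bijection by writing down an explicit inverse and checking compatibility with the two conjugation actions. First I would set up the correspondence on the level of elements: given $h\in H(F)_{\mathrm{tu}}$ we attach $h\rtimes\theta\in\tG(F)$, and I note that $h\rtimes\theta$ lies in $\mathcal U$ by definition (take $g=1$ in \eqref{twcm}). Conversely, given a $G(F)$-conjugacy class in $\mathcal U$, pick a representative $g(h\rtimes\theta)g^{-1}=(gh\theta(g)^{-1})\rtimes\theta$ with $h\in H(F)_{\mathrm{tu}}$; I want to show that its $G(F)$-conjugacy class is determined by the $H(F)$-conjugacy class of $h$. Surjectivity of the map between class sets is immediate from the definition of $\mathcal U$; the content is injectivity, i.e. that if $h_1,h_2\in H(F)_{\mathrm{tu}}$ and $g\in G(F)$ satisfy $gh_1\theta(g)^{-1}=h_2$, then $h_1$ and $h_2$ are already conjugate inside $H(F)=GL_n(F)$.

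The key device is a descent/averaging argument exploiting that $p$ is prime to $d=[E:F]$. Suppose $gh_1\theta(g)^{-1}=h_2$ with both $h_i$ in $H(F)=GL_n(F)$, so in particular $h_1,h_2\in GL_n(F)\subset GL_n(E)=G(F)$ are $\theta$-fixed. Applying $\theta$ repeatedly and multiplying gives
\[
(g\theta(g)\theta^2(g)\cdots\theta^{d-1}(g))\,h_1\,(g\theta(g)\cdots\theta^{d-1}(g))^{-1}=h_2,
\]
using $\theta^d=\mathrm{id}$ and $\theta(h_i)=h_i$; wait, one must be careful with the telescoping, so the precise statement is that $N(g):=g\cdot\theta(g)\cdots\theta^{d-1}(g)$ conjugates $h_1$ to $h_2$ after the cocycle identity is unwound. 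Set $N = N(g)\in G(F)=GL_n(E)$. Then $N h_1 N^{-1}=h_2$, and also $N$ is $\theta$-stable in the twisted sense; the standard Shapiro/Hilbert~90 computation shows $N$ is $G(F)$-conjugate-twisted to an element fixed by $\theta$, i.e. one can modify $g$ within its $H(F)$-orbit so that $g\in GL_n(F)$, which then gives $h_1,h_2$ conjugate in $GL_n(F)$. To make ``topologically unipotent'' do its job, I would instead argue via the logarithm: since $p\nmid d$ and $h$ is topologically unipotent, $h$ lies in a pro-$p$ group on which $X\mapsto X^{1/d}$ and $\log$ are available, so a $d$-th root of a twisted cocycle can be extracted canonically inside the pro-$p$ subgroup, killing the relevant $H^1$ with the argument being genuinely constructive rather than an abstract vanishing.

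Concretely, the steps in order: (1) show the map $h\mapsto h\rtimes\theta$ descends to a well-defined map on conjugacy-class sets — if $h_2 = h^{-1}h_1 h$ with $h\in H(F)$ then conjugating $h_1\rtimes\theta$ by $h\in G(F)$ gives $h_2\rtimes\theta$ since $\theta(h)=h$; (2) surjectivity, immediate from $\mathcal U = \mathrm{tc}(G(F)\times H(F)_{\mathrm{tu}})$; (3) injectivity: given $g\in G(F)$ with $gh_1\theta(g)^{-1}=h_2$, form the norm $N(g)$, observe $N(g)h_1 N(g)^{-1}=h_2$ after the telescoping identity, and then use the pro-$p$/averaging argument (valid since $p\nmid d$ and the $h_i$ are topologically unipotent) to replace $g$ by an element of $H(F)=GL_n(F)$, concluding that $[h_1]=[h_2]$ in $H(F)_{\mathrm{tu}}$. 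I expect step (3), specifically the passage from ``$g\in GL_n(E)$ conjugating $h_1$ to $h_2$ twistedly'' to ``$g$ may be taken in $GL_n(F)$'', to be the main obstacle: this is exactly where the hypothesis $p\nmid d$ is consumed, and the clean way is to phrase it as a vanishing of a twisted $H^1$ of a pro-$p$ group (which has no nontrivial continuous cohomology with coefficients in a module of order prime to $p$, here governed by the cyclic group of order $d$), combined with the fact that centralizers of topologically unipotent elements behave well. I would likely cite or adapt the corresponding descent lemma from \cite{GV} or \cite{KS} rather than redo it from scratch, since the excerpt explicitly says we follow \cite{GV}.
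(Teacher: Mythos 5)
Your structural reduction (well-definedness of the map on classes and surjectivity are immediate; the whole content is injectivity, where the hypotheses on $p$, $d$ and topological unipotence must enter) agrees with the paper, but your injectivity argument has two genuine problems. First, the telescoping identity you assert is not what the cocycle relation gives. From $h_2=g h_1\theta(g)^{-1}$ with $\theta(h_1)=h_1$ and $\theta(h_2)=h_2$, applying $\theta$ repeatedly and multiplying yields
\begin{equation*}
h_2^{\,d}=\bigl(gh_1\theta(g)^{-1}\bigr)\bigl(\theta(g)h_1\theta^2(g)^{-1}\bigr)\cdots\bigl(\theta^{d-1}(g)h_1 g^{-1}\bigr)=g\,h_1^{\,d}\,g^{-1},
\end{equation*}
i.e.\ the $d$-th powers become conjugate by $g$; it is \emph{not} a formal consequence that $N(g)=g\theta(g)\cdots\theta^{d-1}(g)$ conjugates $h_1$ to $h_2$. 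Indeed, for $n=1$, $d=2$ the elements $h_1=1$ and $h_2=-1=g\theta(g)^{-1}$ are $\theta$-fixed and twisted conjugate, yet no element conjugates $1$ to $-1$; so your claimed identity cannot follow from "unwinding the cocycle identity" alone — it already encodes the conclusion and requires the arithmetic input you have not supplied. Second, the step you yourself flag as the crux — modifying $g$ to lie in $GL_n(F)$ by "vanishing of a twisted $H^1$ of a pro-$p$ group" — is not justified: the coefficient group of the relevant cocycle is the twisted centralizer (or transporter), which is not pro-$p$ (for $h_1=1$ it is all of $G$), and the objects that are topologically unipotent are $h_1,h_2$, not $g$. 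The hypothesis $p\nmid d$ cannot be consumed there.

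The place where $p\nmid d$ and topological unipotence actually do the work is the identity $g h_1^{\,d} g^{-1}=h_2^{\,d}$ itself. Set $x=gh_1g^{-1}\in G(F)_{\rm tu}$; then $x^{d}=h_2^{\,d}$, and since the $d$-th power map is injective on topologically unipotent elements when $p\nmid d$ — the paper argues this directly by choosing $a$ with $ap\equiv-1\pmod d$, noting $x^{(ap)^n+1}=h_2^{(ap)^n+1}$ for odd $n$ and letting $n\to\infty$ using $\lim x^{p^n}=1$; alternatively one can invoke Lemma \ref{cprime} — one concludes $gh_1g^{-1}=h_2$, i.e.\ ordinary $G(F)$-conjugacy. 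The remaining passage to $H(F)$-conjugacy is then the standard fact, special to $GL_n$, that two matrices in $GL_n(F)$ conjugate in $GL_n(E)$ are already conjugate in $GL_n(F)$; no cohomological descent of $g$ is needed. As written, your proposal is missing this $d$-th power/limit argument entirely, and the two steps you substitute for it (the $N(g)$-identity and the pro-$p$ $H^1$ vanishing applied to $g$) do not hold.
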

\begin{proof} We mimic the proof of Lemma 4.0.1, p.996 of \cite{GV}. 
The surjectivity follows from the definition of $\mathcal{U}$. 
Therefore we prove the injectivity. Take two elements $h_1$ and $h_2$ in 
$H(F)_{{\rm tu}}$ such that $h_1\rtimes \theta$ is $G(F)$-conjugate to 
$h_2\rtimes \theta$. Hence, there exists $g\in G(F)$ such that 
$h_2\rtimes \theta=g^{-1}(h_1\rtimes \theta)g=g^{-1}h_1\theta(g)\rtimes \theta$. 
Therefore, $h_2=g^{-1}h_1\theta(g)$. 
Since $h_1$ and $h_2$ are $\theta$-fixed, we have 
$g^{-1}h^2_1\theta^2(g)=h^2_2$. By repeating this, we have 
$$h^d_2=g^{-1}h^d_1\theta^d(g)=g^{-1}h^d_1 g=(g^{-1}h_1g)^d$$ since 
$\theta^d=1$. Pick $a\in \Z_{>0}$ such that $ap\equiv -1$ mod $d$ 
so that for any odd positive integer $n$, we have $(ap)^n+1\equiv 0$ mod $d$. 
It follows from this that $(g^{-1}h_1g)^{(ap)^n+1}=h^{(ap)^n+1}_2$. 
By taking the limit $\ds\lim_{n\to \infty\atop n:{\rm odd}}$ on both sides, 
we have $g^{-1}h_1g=h_2$. For $H=GL_n/F$, stable conjugacy is the same as conjugacy 
and the claim follows.  
\end{proof}
\section{The normalizing factors of orbital integrals}\label{factors}
In this section we collect some facts on the normalizing factors of 
orbital integrals which will be used to define the normalized orbital integrals. 
Let us fix a $p$-adic norm $|\cdot|_F$ on $F$. 
We denote by $\frak g$ (resp, $\frak h$) the Lie algebra of $G(F)$ (resp. $H(F)$). 
Similarly for 
$\delta=g\rtimes \theta$ with an element $g$ in $G(F)$ , 
we define the Lie algebra $\frak g_{\delta}$ for $G_\delta(F)$ where $G_\delta$ is the stabilizer 
of $\delta$ in $G$ which is also given by the $\theta$-twisted stabilizer of $g$ in $G$.   
For $x\in G(F)$ we see 
${\rm Int}(\delta)(x):=\delta x\delta^{-1}={\rm Int}(g)\circ \theta(x)$. Hence 
${\rm Int}(\delta)={\rm Int}(g)\circ \theta$ in ${\rm Aut}(G)$. It also yields 
${\rm Ad}(\delta)={\rm Ad}(g)\circ d\theta$ in ${\rm Aut}(\frak g)$ where 
$d\theta:\frak g\lra \frak g$ is the derivative of $\theta$ at the origin.  
Here ${\rm Ad}(\delta)$ stands for the adjoint action of $\delta$ on $\frak g$ and 
${\rm Ad}(g)$ as well. 
The map 
$d\theta$ is nothing but the natural (Galois) action of $\theta$ on $\frak g=M_n(E)$.  

For $\delta=g\rtimes \theta$ with a semisimple element $g$ in $G(F)$ 
we define the normalizing factor at $\delta$ as follows: 
\begin{equation}\label{nf1}
D_{\tG}(\delta):=|\det(({\rm Ad}(\delta)-1)|\frak g/\frak g_\delta)|_F
\end{equation}
Similarly, for an element $\gamma$ in $H(F)$ 
we define 
\begin{equation}\label{nf2}
D_{H}(\gamma):=|\det((1-{\rm Ad}(\gamma))|\frak h/\frak h_\gamma)|_F
=|\det(({\rm Ad}(\gamma)-1)|\frak h/\frak h_\gamma)|_F
\end{equation}
where $\frak h_\gamma$ is the Lie algebra of $H_\gamma(F)$. 
\begin{lem}\label{ctf}
Assume the residual characteristic $p$ of $F$ is prime to $d:=[E:F]$. 
 If $\gamma\in H(F)\subset G(F)$ is topologically unipotent, then 
$$D_{\tG}(\gamma\rtimes \theta)=D_{H}(\gamma)$$
\end{lem}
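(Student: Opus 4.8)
The statement compares two determinants: $D_{\tG}(\gamma\rtimes\theta)$, built from $\mathrm{Ad}(\gamma\rtimes\theta)-1 = \mathrm{Ad}(\gamma)\circ d\theta - 1$ acting on $\fg/\fg_{\gamma\rtimes\theta}$, and $D_H(\gamma)$, built from $\mathrm{Ad}(\gamma)-1$ acting on $\fh/\fh_\gamma$. The plan is to decompose $\fg = M_n(E)$ as a module over the cyclic group $\langle\theta\rangle$ acting through the Galois action on $E$, and reduce the $\tG$-side determinant to the $H$-side one by separating the $d\theta$-fixed part (which is $\fh = M_n(F)$) from its complement.

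First I would reduce to the diagonalizable situation: since $\gamma$ is semisimple in $H(F)\subset G(F)$ and $\theta$ commutes with $\mathrm{Ad}(\gamma)$ on the relevant spaces (because $\gamma\in GL_n(F)$ is $\theta$-fixed), the operators $\mathrm{Ad}(\gamma)$ and $d\theta$ commute, so I can analyze their joint action. Write $\fg = \fh \oplus \fg'$ where $\fg' = (1-d\theta)\fg$ is the augmentation-type complement; because $p$ is prime to $d=[E:F]$, the idempotent $\frac1d\sum_{i}d\theta^i$ is defined over $F$ (here one uses $e=e(E/F)$ is also prime to $p$, or rather that $d\in\O_F^\times$), giving a $\mathrm{Ad}(\gamma)$-stable, $d\theta$-stable decomposition with $d\theta|_{\fg'}$ having no eigenvalue $1$. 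Correspondingly $\fg_{\gamma\rtimes\theta} = (\fg_\gamma)^{d\theta}$ decomposes, with its ``$\fh$-part'' equal to $\fh_\gamma$ and its ``$\fg'$-part'' contained in $\fg'_\gamma$. On the $\fh$-summand, $d\theta$ is the identity, so $\mathrm{Ad}(\gamma)\circ d\theta - 1 = \mathrm{Ad}(\gamma)-1$ acting on $\fh/\fh_\gamma$, contributing exactly $D_H(\gamma)$.

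It remains to show the contribution of the $\fg'$-summand to $D_{\tG}(\gamma\rtimes\theta)$ is $1$, i.e. $|\det((\mathrm{Ad}(\gamma)\circ d\theta - 1)\,|\,\fg'/\fg'_{\gamma\rtimes\theta})|_F = 1$. Here I would argue that $\mathrm{Ad}(\gamma)\circ d\theta - 1$ restricted to $\fg'$ (mod its fixed space) is an $\O_F$-lattice automorphism: the eigenvalues of $d\theta$ on $\fg'\otimes\bar F$ are nontrivial $d$-th roots of unity, hence units whose $F$-norms involve only primes dividing $d$, which are prime to $p$; meanwhile $\mathrm{Ad}(\gamma)$ is topologically unipotent (as $\gamma$ is), so all its eigenvalues are $\equiv 1$ in the maximal ideal, and thus $\mathrm{Ad}(\gamma)\circ d\theta$ has eigenvalues that are units $\not\equiv 1$, making $\mathrm{Ad}(\gamma)\circ d\theta - 1$ a unit on each eigenspace. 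More carefully, since $\gamma$ is topologically unipotent, $\mathrm{Ad}(\gamma) = 1 + N$ with $N$ topologically nilpotent, so $\mathrm{Ad}(\gamma)\circ d\theta - 1 = (d\theta - 1) + Nd\theta$ where $d\theta-1$ is invertible on $\fg'$ with unit determinant, and $Nd\theta$ is topologically nilpotent, hence the sum has unit determinant on $\fg'$; the same applies to the fixed space $\fg'_{\gamma\rtimes\theta}$, and the quotient determinant is the ratio, still a unit.

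The main obstacle I expect is the bookkeeping showing that the decomposition $\fg = \fh\oplus\fg'$ is compatible with passing to the centralizers $\fg_\gamma$ and the $d\theta$-fixed subalgebras simultaneously, so that $\fg/\fg_{\gamma\rtimes\theta}$ genuinely splits as $(\fh/\fh_\gamma)\oplus(\fg'/\fg'_{\gamma\rtimes\theta})$ with the operator $\mathrm{Ad}(\gamma)\circ d\theta-1$ respecting the splitting; this rests on the fact that $d\theta$ is semisimple with $F$-rational idempotents (the prime-to-$p$ hypothesis on $d$) and commutes with the semisimple $\mathrm{Ad}(\gamma)$, so that everything is simultaneously block-diagonalizable over $F$. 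Once the splitting is in place, the computation of each block is routine as sketched.
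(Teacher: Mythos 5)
Your proposal is correct and takes essentially the same approach as the paper: both decompose $\fg=\fh\oplus(1-d\theta)\fg$, identify $\fg_{\gamma\rtimes\theta}$ with $\fh_\gamma$, recover $D_H(\gamma)$ from the $\fh$-block where $d\theta=1$, and use topological unipotence of $\gamma$ together with $p\nmid d$ to see that the complementary block contributes a determinant that is a unit, hence of absolute value $1$. The only cosmetic difference is that the paper factors $\mathrm{Ad}(\gamma)\circ d\theta-1=\mathrm{Ad}(\gamma)\bigl(d\theta-\mathrm{Ad}(\gamma^{-1})\bigr)$ and argues eigenvalue by eigenvalue, while you write it as $(d\theta-1)+N\,d\theta$ with $N=\mathrm{Ad}(\gamma)-1$.
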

\begin{proof}
Clearly, $\frak h=\frak g^{d\theta=+1}=\frak g^\theta$. 
We decompose $\frak g=\frak h\oplus \frak g_1$ where $\frak g_1$ is 
a $d\theta$-stable subspace of $\frak g$ such that any eigenvalue of 
$d\theta|_{\frak g_1}$ is different from $1$. 
Therefore, the all eigenvalues of ${\rm Ad}(\gamma)\circ d\theta-1$ on $\frak g_1$ 
belong to $\mathcal{O}^\times_{\oF}$ 
since  $\gamma$ is topologically unipotent and 
the residual characteristic $p$ of $F$ is prime to $d$. 
It is also clear that $d\theta-1=0$ on $\frak h$ and all eigenvalues of 
${\rm Ad}(\gamma)$ on $\frak g$ belong to 
$\mathcal{O}^\times_{\oF}$ since $\gamma$ is topologically unipotent.   
Note that ${\rm Ad}(\gamma\rtimes \theta)-1={\rm Ad}(\gamma)\circ d\theta-1=
{\rm Ad}(\gamma)(d\theta-{\rm Ad}(\gamma^{-1}))$ and also 
$\frak g_{\gamma\rtimes \theta}=\frak h_\gamma$ 
(since $\frak g^\theta=\frak h$) so that $\frak g/\frak h_\gamma=
\frak h/\frak h_\gamma\oplus \frak g_1$.  
Then we have 
\begin{eqnarray}
D_{\tG}(\gamma\rtimes \theta)&=&
|\det({\rm Ad}(\gamma)|\frak g/\frak h_\gamma)|_F\times 
|\det(d\theta-{\rm Ad}(\gamma^{-1})|\frak g/\frak h_\gamma)|_F 
\nonumber \\
&=& |\det(d\theta-{\rm Ad}(\gamma^{-1})|\frak g/\frak h_\gamma)|_F 
\nonumber \\
&=&|\det(d\theta-1+1-{\rm Ad}(\gamma^{-1})|\frak h/\frak h_\gamma)|_F 
\times |\det(d\theta-{\rm Ad}(\gamma^{-1})|\frak g_1)|_F \nonumber\\
&=&|\det(1-{\rm Ad}(\gamma^{-1})|\frak h/\frak h_\gamma)|_F
=|\det({\rm Ad}(\gamma)-1|\frak h/\frak h_\gamma)|_F\nonumber \\
&=&D_H(\gamma). \nonumber
\end{eqnarray}
\end{proof}

We define the Cayley transform 
\begin{equation}\label{cayley}
\frak c:\frak g\lra G(F),\ X\mapsto \Big(1+\frac{X}{2}\Big)\Big(1-\frac{X}{2}\Big)^{-1}
\end{equation}
which is birationally defined (hence, not defined on the whole space).
By Lemma 3.2.3 of \cite{GV}, $\frak c$ induces 
\begin{equation}\label{homeo}
\frak g_{{\rm tn}}\stackrel{\sim}{\lra} G(F)_{{\rm tu}},\ 
\frak h_{{\rm tn}}\stackrel{\sim}{\lra} H(F)_{{\rm tu}}
\end{equation}
and it satisfies nice properties  
\begin{equation}\label{nice}
\frak c(-X)=\frak c(X)^{-1}\ 
{\rm and}\ {\rm Int}(J)\circ \frak c=\frak c\circ {\rm Ad}(J)
\end{equation}
for any  $X\in \frak g_{{\rm tn}}$ and $J\in G(F)$  (cf. Remark 3.2.4 of \cite{GV}). 
It also holds that $\frak c \circ d\theta=\theta \circ \frak c$. 


Finally, we end this section with the following lemma.
\begin{lem}\label{cprime}The residual characteristic $p$ of $F$ is odd and 
prime to $d=[E:F]$. Then,  
the map $H(F)_{{\rm tu}}\lra H(F)_{{\rm tu}},\ \gamma'\mapsto \gamma'^d$ is a homeomorphism. 
\end{lem}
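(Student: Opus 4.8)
The plan is to prove that the $d$-th power map $m_d\colon H(F)_{{\rm tu}}\to H(F)_{{\rm tu}}$ of the statement is a continuous open bijection, and hence a homeomorphism. Continuity is immediate, since $m_d$ is the restriction of the polynomial self-map $g\mapsto g^{d}$ of $GL_n(F)$, so the two real points are bijectivity and openness. As a preliminary I would record the standard description of $H(F)_{{\rm tu}}$: an element $\gamma\in GL_n(F)$ is topologically unipotent if and only if every eigenvalue of $\gamma$ in $\oF$ lies in $1+\mathfrak{m}_{\oF}$, where $\mathfrak m_{\oF}$ is the maximal ideal of $\mathcal{O}_{\oF}$. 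Indeed the unipotent part of $\gamma$ always satisfies $\gamma_u^{p^{k}}\to 1$ by the binomial expansion, while for the semisimple part $\lambda^{p^{k}}\to 1$ forces $\lambda$ to be a unit reducing to $1$ in the residue field, i.e. $\lambda\in 1+\mathfrak m_{\oF}$, and conversely. Substituting $T=1+S$ and using the Newton polygon, this condition is equivalent to the characteristic polynomial of $\gamma$ being $\equiv (T-1)^{n}\pmod{\mathfrak m_F}$ coefficient by coefficient; the latter being an open (indeed closed) condition, $H(F)_{{\rm tu}}$ is an open subset of $GL_n(F)$, hence a $p$-adic analytic manifold, and it is visibly stable under conjugation.

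For bijectivity I would use the $\Z_p$-power structure on topologically unipotent elements. Given $\gamma\in H(F)_{{\rm tu}}$, for each $j\ge 1$ one has $\gamma^{p^{k}}\in K_F(j)$ for all large $k$, hence $\gamma^{n}(\gamma^{n'})^{-1}=(\gamma^{p^{k}})^{(n-n')/p^{k}}\in K_F(j)$ whenever $n\equiv n'\pmod{p^{k}}$; thus $n\mapsto\gamma^{n}$ is uniformly $p$-adically continuous and extends to a continuous homomorphism $\Z_p\to GL_n(F)$, $a\mapsto\gamma^{a}$ (the limit is invertible because $\det\gamma\in 1+\mathfrak m_F$). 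Since $(\gamma^{a})^{p^{k}}=\gamma^{ap^{k}}\to\gamma^{0}=1$, this lands in $H(F)_{{\rm tu}}$, and by density of $\Z$ in $\Z_p$ one has $(\gamma^{a})^{b}=\gamma^{ab}$ for all $a,b\in\Z_p$. As $\gcd(d,p)=1$ we have $d\in\Z_p^{\times}$; writing $d^{-1}$ for its inverse in $\Z_p$, the map $\gamma\mapsto\gamma^{d^{-1}}$ is then a two-sided inverse of $m_d$ on $H(F)_{{\rm tu}}$.

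For openness I would invoke the $p$-adic inverse function theorem on the manifold $H(F)_{{\rm tu}}$. Fix $\gamma_0\in H(F)_{{\rm tu}}$; identifying tangent spaces by translations, the differential of $m_d$ at $\gamma_0$ is the operator $\sum_{i=0}^{d-1}{\rm Ad}(\gamma_0)^{-i}$ on $\mathfrak h=M_n(F)$. The eigenvalues of ${\rm Ad}(\gamma_0)$ on $\mathfrak h\otimes_F\oF$ are the ratios $\lambda_a\lambda_b^{-1}$ of eigenvalues of $\gamma_0$, hence lie in $1+\mathfrak m_{\oF}$; therefore every eigenvalue of $\sum_{i=0}^{d-1}{\rm Ad}(\gamma_0)^{-i}$ has the form $\sum_{i=0}^{d-1}\nu^{i}$ with $\nu\in 1+\mathfrak m_{\oF}$, and so is $\equiv d\pmod{\mathfrak m_{\oF}}$, a unit because $\gcd(d,p)=1$. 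Hence the differential is invertible at every point of $H(F)_{{\rm tu}}$, so every $\gamma_0$ has an open neighbourhood $V\subseteq H(F)_{{\rm tu}}$ that $m_d$ carries homeomorphically onto an open subset of $GL_n(F)$, necessarily contained in $H(F)_{{\rm tu}}$. Thus $m_d$ is open, and being a continuous open bijection it is a homeomorphism.

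The crux is the differential computation of the last paragraph, namely the invertibility of $\sum_{i=0}^{d-1}{\rm Ad}(\gamma_0)^{-i}$ at \emph{every} topologically unipotent $\gamma_0$; this is exactly where the coprimality of $p$ and $d$ enters essentially, while the remaining ingredients are formal.
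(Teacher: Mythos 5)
Your proof is correct, and the key point is exactly the right one: coprimality of $d$ and $p$ makes $d$ a unit in $\Z_p$, so the inverse of the $d$-th power map is the $d^{-1}$-power map on topologically unipotent elements, and the same coprimality makes the differential $\sum_{i=0}^{d-1}{\rm Ad}(\gamma_0)^{-i}$ invertible (all eigenvalues $\equiv d \bmod \mathfrak{m}_{\oF}$). The paper itself gives no argument: it simply invokes the argument of Lemma 3.2.7 of \cite{GV}, which rests on the same $\Z_p$-power structure on $H(F)_{\rm tu}$; there the continuity of the inverse is obtained within the pro-$p$ groups containing topologically unipotent elements (the map $\gamma\mapsto\gamma^{d^{-1}}$ being a locally uniform limit of the polynomial maps $\gamma\mapsto\gamma^{n_i}$ with $n_i\to d^{-1}$ in $\Z_p$), whereas you get openness globally on $H(F)_{\rm tu}$ from the $p$-adic inverse function theorem after observing that $H(F)_{\rm tu}$ is open in $GL_n(F)$. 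Both routes are legitimate; yours is self-contained and makes the role of the hypothesis $p\nmid d$ visible twice (bijectivity and openness), at the cost of the differential computation. One small point worth making explicit in your uniform-continuity step: to pass from $\gamma^{n}(\gamma^{n'})^{-1}\in K_F(j)$ to metric closeness of $\gamma^{n}$ and $\gamma^{n'}$ you should note that all integer powers of $\gamma$ lie in a compact set (e.g. $\bigcup_{0\le r<p^{k}}\gamma^{r}K_F(1)$ once $\gamma^{p^{k}}\in K_F(1)$), so their entries are uniformly bounded; this is routine and does not affect the argument.
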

\begin{proof}It follows from the argument in the proof of Lemma 3.2.7 of \cite{GV}. 
\end{proof}

\section{Normalized orbital integrals and semi-simple descent}\label{oi}
For any topological space $X$ we denote by $C^\infty_c(X)$ the space of 
locally constant functions on $X$ whose supports are compact. 
Recall our twisted space $\tG$. 

For $\delta=\delta_1\rtimes \theta \in \tG(F)$ with a semisimple element $\delta_1\in G(F)$ 
and $\widetilde{f}\in C^\infty_c(\tG(F))$, we define the normalized 
orbital integral of $\widetilde{f}$ at $\delta$ as follows: 
\begin{equation}\label{noiG1}
I_\delta(\wf,dg,dg_{\delta})=D_{\tG}(\delta)^{\frac{1}{2}}\int_{G_\delta(F)\bs G(F)}
\wf(g^{-1}\delta g) dg/dg_\delta
\end{equation}
with respect to the quotient measure $dg/dg_\delta$ of 
measures $dg$ on $G(F)$ and $dg_\delta$  on $G_\delta(F)$. 
The normalizing factor plays an important role when we view the above integral 
 $I_\ast(\wf,dg,dg_{\delta})$ as the Schwartz space of $G(F)$ but we do not 
use this point.   

Let $f$ be the element in $C^\infty_c(G(F))$ corresponding to $\wf$ under the 
isomorphism $G(F)\stackrel{\sim}{\lra}\tG(F),\ g\mapsto g\rtimes \theta$. 
Then, we see that 
\begin{equation}\label{noiG2}
I_\delta(\wf,dg,dg_{\delta})=D_{\tG}(\delta)^{\frac{1}{2}}\int_{G_\delta(F)\bs G(F)}
f(g^{-1}\delta_1 \theta(g)) dg/dg_\delta.
\end{equation}
Similarly, for a semisimple element $\gamma \in H(F)(\subset G(F))$ and  $f^H\in C^\infty_c(H(F))$, we define 
\begin{equation}\label{noiH}
I_\gamma (f^H,dh,dh_\gamma ):=
D_{H}(\gamma )^{\frac{1}{2}}\int_{H_\gamma (F)\bs H(F)}
f^H(h^{-1}\gamma  h) dh/dh_\gamma .
\end{equation}
with respect to the quotient measure $dh/dh_\gamma$ of 
measures $dh$ on $G(F)$ and $dh_\gamma$  on $H_\gamma(F)$. 

Recall $\mathcal{U}={\rm tc}(G(F)\times H(F)_{{\rm tu}})$. Since $\mathcal U$ 
is open in $\tG(F)$ (cf. Lemma 4.0.6 of \cite{GV}), any element in $ C^\infty_c(\mathcal U)$ can be regarded as  
an element in $ C^\infty_c(\tG(F))$. Hence we may assume 
$ C^\infty_c(\mathcal U)\subset  C^\infty_c(\tG(F))$.
\begin{Def}$($Semisimple descent$)$Let $\wf\in C^\infty_c(\mathcal U)\subset  C^\infty_c(\tG(F))$. 
We say an element $f^H$ in $C^\infty_c(H(F)_{{\rm tu}})$ can be obtained 
from $\wf$ by semi-simple descent at $\theta$ with respect to $dg$ and $dh$ if 
for all semi-simple element $\gamma $ in $H(F)_{{\rm tu}}$,
\begin{equation}\label{SSD}
I_\gamma (f^H,dh,dh_\gamma )=I_{\gamma \rtimes\theta}(\wf,dg,dg_{\gamma \rtimes \theta}).
\end{equation}
Notice that $G_{\gamma \rtimes \theta}=H_\gamma $. Therefore, we choose the same measure 
on $G_{\gamma \rtimes\theta}$ and $H_\gamma$ so that $dg_{\gamma \rtimes\theta}=dh_\gamma$. 
Therefore, the semisimple descent depends on the choice of the measures $dg$ and 
$dh$. 
\end{Def}
Next we recall about the transfer matching orbital integrals 
(see (5.5.1), p.71 of \cite{KS}). 
As explained below, we fix a $\theta$-stable Whittaker datum defined in 
Section 5.3 of \cite{KS} for our 
$\theta$-splitting datum ${\rm spl}_G$ introduced at the beginning of Section 
\ref{tes}. This is necessary to normalize transfer factors in Section 5.3 of \cite{KS}. 
\begin{Def}\label{moi}$($Matching of orbital integrals and transfer of Hecke elements$)$ 
We say $\wf\in  C^\infty_c(\tG(F))$ and $f^H \in  C^\infty_c(H(F))$ have matching 
orbital integrals if, for every strongly $G$-regular element $\gamma  \in H(F)\subset G(F)$, 
\begin{equation}\label{transfer}
I_{\gamma }(f^H,dh,dh_\gamma )=\sum_{\delta}\Delta^{{\rm IV}}(\gamma,\delta)
I_{\delta\rtimes \theta}(\wf,dg,dg_{\delta\rtimes\theta})
\end{equation}
where the sum is taken over the set of complete representatives of 
$\theta$-conjugacy classes of strongly $G$-regular elements $\delta$ in $G(F)$ such that 
$\gamma$ is a norm of $\delta$ and the transfer factor $\Delta^{{\rm IV}}(\gamma,\delta)$ 
is defined in Section 5 of \cite{KS} with the collection of \cite{KSc}. 
The matching $($\ref{transfer}$)$ depends on the choice of the measures 
$dg, dh, dg_{\delta\rtimes\theta}$, and $dh_\gamma$ 
(see Section 3.10 of \cite{Wal-book} for measures in the matching of orbital 
integrals). 
\end{Def}
\begin{remark}
Here is to understand the notation in Chapter 5 of \cite{KS} in our setting. 
The symbol $H_1$ there means a $z$-extension of $H=GL_n/F$ and 
obviously $H_1=H=GL_n/F$. Since $H=GL_n/F$, as mentioned before, for elements in $H(F)=GL_n(F)$, 
the notation of stable conjugacy classes is the same as one of conjugacy classes.    
\end{remark}

\section{Transfer factors for our endoscopic datum}\label{tf}
Let us keep the notation in Definition \ref{moi}. Then we will check the following:
\begin{thm}\label{tf-triv}It holds that 
$$\Delta^{{\rm IV}}(\gamma,\delta)=1.$$
\end{thm}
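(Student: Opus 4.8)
The plan is to compute $\Delta^{\mathrm{IV}}(\gamma,\delta)$ directly from its definition in Section 5.2 of \cite{KS}. Recall that for a (twisted) endoscopic datum the factor $\Delta^{\mathrm{IV}}$ is the ratio of Weyl discriminants
\[
\Delta^{\mathrm{IV}}(\gamma,\delta)=\frac{\bigl|\det\bigl(({\rm Ad}(\delta)-1)\mid \mathfrak g/\mathfrak g_{\delta\rtimes\theta}\bigr)\bigr|_F^{1/2}}{\bigl|\det\bigl(({\rm Ad}(\gamma)-1)\mid \mathfrak h/\mathfrak h_\gamma\bigr)\bigr|_F^{1/2}}
=\frac{D_{\tG}(\delta)^{1/2}}{D_H(\gamma)^{1/2}},
\]
in the normalization where $\delta=\delta_1\rtimes\theta$ and $\gamma$ is a norm of $\delta_1$. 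So the statement $\Delta^{\mathrm{IV}}(\gamma,\delta)=1$ is exactly the assertion $D_{\tG}(\delta)=D_H(\gamma)$ for every strongly $G$-regular $\gamma$ that is a norm of $\delta_1$. The first step is therefore to pin down precisely which discriminant normalization \cite{KS} uses (there is a well-known subtlety: $\Delta^{\mathrm{IV}}$ in Kottwitz--Shelstad is defined using the \emph{twisted} centralizer in the numerator, which here is $G_{\delta\rtimes\theta}=H_\gamma$, so the two discriminants are taken relative to quotients of the \emph{same dimension}), and to note that since $H=GL_n/F$ and $\mathcal H$ has trivial $W_F$-action, there are no $z$-extension corrections and the factor is genuinely just this ratio.

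Next I would reduce to a computation on a torus. Choose $g\in G(F)$ with $g^{-1}\delta_1\theta(g)=t\in T(F)=T_H(E)$, and correspondingly $\gamma\sim \widetilde N_{E/F}(t)\in T_H(F)$ via Remark \ref{diagonal}: writing $t=\diag(t_1,\dots,t_n)$, $\gamma$ has eigenvalues $N_{E/F}(t_1),\dots,N_{E/F}(t_n)$. Both discriminants are conjugation-invariant, so it suffices to compare $D_{\tG}(t\rtimes\theta)$ and $D_H(\gamma)$. Using ${\rm Ad}(\delta)={\rm Ad}(t)\circ d\theta$ on $\mathfrak g=M_n(E)=\bigoplus_{i\in\Z/d\Z}M_n(\oF)$ (with $d\theta$ the shift composed with the Galois action), the operator $({\rm Ad}(t\rtimes\theta))^d$ on each eigenline decomposes into products running over the $\theta$-orbit, and the eigenvalues of ${\rm Ad}(\delta)-1$ that are not units organize themselves — after taking the product over each orbit of size $d$ — into the eigenvalues of ${\rm Ad}(N_{E/F}(t))-1$ acting on $\mathfrak h=M_n(F)$. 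Concretely, the eigenvalues of ${\rm Ad}(\gamma)$ on $\mathfrak h$ are $s_i/s_j$ with $s_i=N_{E/F}(t_i)$, and the eigenvalues of ${\rm Ad}(\delta)$ on $\mathfrak g$ grouped in $d$-cycles have product $\prod_{k}(t_i^{(k)}/t_j^{(k)})=N_{E/F}(t_i)/N_{E/F}(t_j)=s_i/s_j$; the norm $|\cdot|_F$ of a product of Galois conjugates over $E$ matches $|\cdot|_F$ of the norm, so the absolute values agree term by term. This is essentially the twisted-discriminant identity already recorded in Lemma \ref{ctf} for topologically unipotent elements, but now it is needed for \emph{all} strongly $G$-regular $\gamma$, not just topologically unipotent ones; the reduction to the torus makes the general case no harder than the unipotent one.

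The main obstacle I anticipate is purely bookkeeping of the $\mathrm{IV}$-normalization in \cite{KS} — making sure the numerator is the \emph{twisted} Weyl discriminant $D_{\tG}(\delta)=|\det(({\rm Ad}(\delta)-1)\mid\mathfrak g/\mathfrak g_{\delta\rtimes\theta})|_F$ with the correct centralizer, rather than some variant involving $\mathfrak g_\delta$ with $\delta$ untwisted, and confirming that the square-root conventions on the two sides are the same so that nothing survives except the ratio $D_{\tG}(\delta)^{1/2}/D_H(\gamma)^{1/2}$. Once that is fixed, the equality $D_{\tG}(\delta)=D_H(\gamma)$ is the one just sketched via the norm-of-eigenvalues argument. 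I would also double-check that no factor of the form $\langle\cdot,\cdot\rangle$ from $\Delta_{\mathrm I},\Delta_{\mathrm{II}},\Delta_{\mathrm{III}}$ has been silently folded into $\Delta^{\mathrm{IV}}$: since we work only with $\Delta^{\mathrm{IV}}$ here (the other three are handled elsewhere, or are trivial because $s=1$ and the datum is the one from Section \ref{tes}), this should not arise, but it is worth an explicit remark. I would close with the one-line conclusion that $\Delta^{\mathrm{IV}}(\gamma,\delta)=D_{\tG}(\delta)^{1/2}D_H(\gamma)^{-1/2}=1$.
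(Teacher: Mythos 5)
Your proposal misreads what $\Delta^{{\rm IV}}$ denotes in this paper, and as a result the main computation proves a different statement. Here $\Delta^{{\rm IV}}$ --- with \emph{upper} ${\rm IV}$, as stressed in the introduction --- is the Kottwitz--Shelstad transfer factor with the term $\Delta_{{\rm IV}}$ \emph{removed}, i.e.\ essentially the product $\Delta_{{\rm I}}\Delta_{{\rm II}}\Delta_{{\rm III}}$. The Weyl--discriminant ratio $\Delta_{{\rm IV}}=D_{\tG}(\delta\rtimes\theta)^{1/2}D_H(\gamma)^{-1/2}$ is deliberately excluded, precisely because Definition \ref{moi} pairs $\Delta^{{\rm IV}}$ with orbital integrals that are already normalized by the factors $D^{1/2}$; taking $\Delta^{{\rm IV}}$ to \emph{be} that ratio, as you do in your opening display, would double-count the discriminants. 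Consequently your reduction to the torus and the grouping of the eigenvalues of ${\rm Ad}(\delta)\circ d\theta$ into $\theta$-orbits with product $N_{E/F}(t_i)/N_{E/F}(t_j)=s_i/s_j$ establishes the (true and relevant) identity $D_{\tG}(\delta\rtimes\theta)=D_H(\gamma)$ when $\gamma$ is a norm of $\delta$, i.e.\ $\Delta_{{\rm IV}}=1$ --- exactly the term the theorem does not concern, and which the paper only mentions in passing.

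The actual content of Theorem \ref{tf-triv}, namely the triviality of $\Delta_{{\rm I}}$, $\Delta_{{\rm II}}$ and $\Delta_{{\rm III}}$ for the datum $(H,\mathcal{H},s,\xi)$ of Section \ref{tes}, appears in your write-up only as a parenthetical aside (``the other three are handled elsewhere, or are trivial because $s=1$''); that is precisely the part that requires an argument and is nowhere ``handled elsewhere'' in the paper. The paper's proof is exactly this term-by-term check: $\Delta_{{\rm I}}=1$ because $s=1$; $\Delta_{{\rm II}}=1$ (most transparently via Waldspurger's reformulation, again using $s=1$, the point being that the restricted root data give an empty product in our base-change situation); and $\Delta_{{\rm III}}=1$ because $\mathcal{H}=\widehat{G}^{\widehat{\theta},0}\rtimes W_F$, so the cocycle $\mathbf{A}$ measuring the discrepancy between $\mathcal{H}$ and $\widehat{G}^{\widehat{\theta},0}\rtimes W_F$ is trivial. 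If you replace your discriminant computation by this verification --- keeping your observation $\Delta_{{\rm IV}}=1$ only as the side remark the paper also makes --- the proof becomes correct; as it stands, the step identifying $\Delta^{{\rm IV}}$ with $\Delta_{{\rm IV}}$ is a genuine gap.
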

\begin{proof}In what follows, we drop $(\gamma,\delta)$ from the transfer factors. 
Let us recall our twisted endoscopic datum  $(H,\mathcal{H},s,\xi)$ for 
$(G,\theta,\textbf{1}_{{\rm triv}})$  in Section \ref{tes}. Note that 
$\widehat{H}=(\widehat{G}^{\widehat{\theta}})^0$ in the notation of \cite{KS}. 
Because $s=1$, the factor $\Delta_{{\rm I}}$, defined page 33 at line 11 of \cite{KS},  
is trivial. The factor 
$\Delta_{{\rm II}}$ is trivial. 
The factor $\Delta_{{\rm II}}$ in Lemma 4.3.A in page 36 of \cite{KS} 
is rewrote in Section 7.3, p.88 of \cite{Wal-book} and the description there is much easier to 
understand $\Delta_{{\rm II}}$ and in fact, the factor $\Delta_{{\rm II}}=1$ because 
$s = 1$. The factor $\Delta_{{\rm III}}$ defined page 38 of \cite{KS} is trivial because $\textbf{A} = 1$ since $s=1$. The term $\textbf{A} $ measures the difference between 
$\mathcal{H}$ and $\widehat{G}^{\widehat{\theta},0}\rtimes W_F$ . 
There is no difference in our setting because, indeed,  $\mathcal{H}=
\widehat{G}^{\widehat{\theta},0}\rtimes W_F$. The transfer factor $\Delta^{{\rm IV}}$ 
does not contain the terms $\Delta_{{\rm IV}}$. But in fact, even this term is 1 
because its definition in Lemma 4.5.A, page 46 of \cite{KS}, it is as
$\Delta_{{\rm II}}$ a product over an empty set. 
Summing up, we have the claim. 
\end{proof}
\begin{remark}\label{correction}
As Waldspurger pointed out, 
there are some mistakes in \cite{Wal-book} because he has used the
definition of Kottwitz-Shelstad in \cite{KS} but there is a mistake in this definition, cf. Kottwitz-Shelstad ArXiv 2012 \cite{KSc}. However, as he mentioned, this does not affect really 
the proofs in \cite{Wal-book}. 
\end{remark}

\section{Semisimple descent theorem}\label{sdt}
In this section we will prove the semisimple descent theorem for 
some Hecke elements. 
Recall the Lie algebra $\frak g=M_n(E)$ of $G(F)$ and the derivative 
$d\theta$ of $\theta$ at the origin acts there 
by the usual Galois action of $\theta$. We view it as an $\mathcal{O}_F$-module.  
Recall  the open subspace $\mathcal{U}\subset \tG(F)$ defined by 
the twisted conjugation map (\ref{twcm}). The following theorem is an analogue of 
Lemma 4.2.4 of \cite{GV}. 
\begin{thm}\label{mainSSD}
Assume the residual characteristic $p$ of $F$ satisfies $p>2$ and $p\nmid d=[E:F]$. 
 Let $L\subset \frak g_{{\rm tn}}$ be a $d\theta$-invariant 
 $\mathcal{O}_F$-lattice satisfying $L\cdot L\subset \varpi_F L$. 
Write $L=L_1\oplus L_2$ where $L_1$ is an $\O_F$-lattice of $\frak g^\theta$
and $L_2$  is an $\O_F$-lattice of $(1-d\theta)\frak g$ 
$($this is always possible by assumption on the residual characteristic$)$. 
Let $K_L:=\frak c(L)$ and $K_{L,\theta}=\frak c(L)\cap H(F)$ 
so that $K_{L,\theta}=K^\theta_L$ by $\frak c\circ d\theta=\theta\circ \frak c$. 
Then it holds that 
\begin{enumerate}
\item $K_L\subset G(F)_{{\rm tu}}$ $($resp. $K_{L,\theta}\subset H(F)_{{\rm tu}})$ 
is a compact open subgroup of $G(F)$ $($resp. $H(F))$;
\item $K_L\rtimes \theta={\rm tc}(K_L,K_{L,\theta})$;
\item Let $C_H\subset H(F)_{{\rm tu}}$ be an open compact subset and 
$K\subset G(F)$ be an open compact subgroup. 
Assume $C_H$ is invariant under conjugation by $K\cap H(F)$.  
Put $C={\rm tc}(K,C_H)$ which is 
an open compact subset of $\mathcal{U}\subset \tG(F)$. 
Then the element ${\rm vol}^{-1}(K\cap H(F))\textbf{1}_{C_H}$ of 
$C^\infty_c(H(F))$ can be obtained from the element 
${\rm vol}^{-1}(K)\textbf{1}_{C}$ of $C^\infty_c(\mathcal{U})\subset 
C^\infty_c(\tG(F))$ by semisimple descent at $\theta$. 
In particular, ${\rm vol}^{-1}(K_{L,\theta})\textbf{1}_{K_{L,\theta}}$ 
can be obtained from the element 
${\rm vol}^{-1}(K_L)\textbf{1}_{K_L\rtimes \theta}$ by semisimple descent at $\theta$.
\end{enumerate}
\end{thm}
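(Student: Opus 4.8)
The plan is to follow the Lie-algebra strategy of Ganapathy--Varma, transporting everything through the Cayley transform $\frak c$ and reducing the group-theoretic statements to statements about the $d\theta$-invariant lattice $L$. For part (1), I would first check that $K_L=\frak c(L)$ is a subgroup. Since $L\cdot L\subset\varpi_F L$, the image $\frak c(L)$ lies in $I_n+L$ and one computes the product $\frak c(X)\frak c(Y)$ via the Baker--Campbell--Hausdorff-type expansion; the condition $L\cdot L\subset\varpi_F L$ (together with $p>2$, so that the $2$'s in $\frak c$ are invertible) guarantees convergence and closure, exactly as in Lemma~3.2.3 of \cite{GV}. The identity $\frak c(-X)=\frak c(X)^{-1}$ from (\ref{nice}) gives inverses. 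That $K_L\subset G(F)_{\rm tu}$ follows from $L\subset\frak g_{\rm tn}$ and the homeomorphism (\ref{homeo}). For the $\theta$-fixed part: since $\frak c\circ d\theta=\theta\circ\frak c$ and $L$ is $d\theta$-invariant, we get $K_L^\theta=\frak c(L^{d\theta=1})=\frak c(L_1)$, which by the same argument is a compact open subgroup of $H(F)=G(F)^\theta$, and $K_{L,\theta}=K_L\cap H(F)=K_L^\theta$.

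For part (2), the inclusion ${\rm tc}(K_L,K_{L,\theta})\subset K_L\rtimes\theta$ is immediate because $K_L$ is $\theta$-stable and a group: $g\,h\,\theta(g)^{-1}\in K_L$ for $g\in K_L$, $h\in K_{L,\theta}$. The reverse inclusion is the substantive point: given $h_0\in K_L$, one must solve $h_0=g\,h\,\theta(g)^{-1}$ with $g\in K_L$ and $h\in K_{L,\theta}=K_L^\theta$. I would transport this to the lattice: writing $h_0=\frak c(X_0)$ and using $L=L_1\oplus L_2$, the map $(g,h)\mapsto{\rm tc}(g,h)$ becomes, to first order, $X\oplus X_1\mapsto X_1+(1-d\theta)X$, which is surjective onto $L=L_1\oplus L_2$ because $(1-d\theta)$ is an automorphism of $\frak g_1=(1-d\theta)\frak g$ (again using $p\nmid d$). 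A successive-approximation / Hensel-type argument on the lattice filtration $L\supset\varpi_F L\supset\varpi_F^2L\supset\cdots$, upgrading the first-order surjectivity to an exact solution, then gives the claim; this is the analogue of Lemma~4.0.3/4.0.5 of \cite{GV}, and it is where the hypothesis $L\cdot L\subset\varpi_F L$ is used to control the error terms.

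For part (3), this is a change-of-variables computation for the orbital integral, parallel to Lemma~4.2.4 of \cite{GV}. Fix a semisimple $\gamma\in H(F)_{\rm tu}$. By Lemma~\ref{bijection} and Lemma~\ref{ctf}, $D_{\tG}(\gamma\rtimes\theta)=D_H(\gamma)$, and $G_{\gamma\rtimes\theta}=H_\gamma$, so after choosing $dg_{\gamma\rtimes\theta}=dh_\gamma$ the two sides of (\ref{SSD}) differ only in the domains of integration. Unwinding, $I_{\gamma\rtimes\theta}({\rm vol}^{-1}(K)\textbf{1}_C,\ldots)$ counts, with the appropriate volume weight, the set of $g\in G_{\gamma\rtimes\theta}(F)\bs G(F)$ with $g^{-1}\gamma\theta(g)\in C={\rm tc}(K,C_H)$; since $\gamma\in C_H$, the condition $g^{-1}\gamma\theta(g)\in C$ is, after part (2)-type bookkeeping, equivalent to $g$ lying in finitely many $K$-cosets mapping $\gamma$ into the $H(F)$-conjugacy class data recorded by $\textbf{1}_{C_H}$, and the volume normalizations ${\rm vol}^{-1}(K)$ versus ${\rm vol}^{-1}(K\cap H(F))$ match up precisely because ${\rm tc}$ has ``fibre'' $G_{\gamma\rtimes\theta}(F)\cap K=H_\gamma(F)\cap(K\cap H(F))$ on the relevant locus. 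The last sentence of (3) is the special case $C_H=K_{L,\theta}$, $K=K_L$, $C=K_L\rtimes\theta$, using part (2) to identify ${\rm tc}(K_L,K_{L,\theta})=K_L\rtimes\theta$.

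The main obstacle is part (2), specifically the surjectivity ${\rm tc}(K_L,K_{L,\theta})\supset K_L\rtimes\theta$: it requires converting the infinitesimal surjectivity of $X\mapsto(1-d\theta)X$ on $\frak g_1$ into an honest solvability statement on the group via iteration, and keeping track that the successive corrections stay inside $L_1$ and $L_2$ respectively so that the resulting $g$ and $h$ genuinely lie in $K_L$ and $K_{L,\theta}$. The hypotheses $p>2$, $p\nmid d$, and $L\cdot L\subset\varpi_F L$ all enter exactly here.
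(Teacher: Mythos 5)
Your plan is essentially the paper's own argument: both transplant Ganapathy--Varma's Lie-algebra method, using the Cayley transform, the invertibility of $1-d\theta$ on the non-fixed part of the lattice (via $p\nmid d$) with an iteration controlled by $L\cdot L\subset\varpi_F L$ for part (2), and a descent/unfolding of the orbital integral (the content of Lemma 4.2.5 of \cite{GV}, which the paper simply cites) together with Lemma \ref{ctf} for part (3). The only differences are cosmetic: the paper terminates the approximation in (2) after finitely many steps using openness of ${\rm tc}$ and compactness of $L_1$ instead of passing to a limit, and it quotes the volume bookkeeping in (3) rather than rederiving it.
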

\begin{proof}
 Let us follow the proof of Lemma 4.2.4 of 
\cite{GV} verbatim with a minor modification.   
The first claim follows from Lemma 4.2.3, p.998 of \cite{GV}. 
Next let us prove the second claim. 
Obviously, ${\rm tc}(K_L,K_{L,\theta})$ is contained in $K_L\rtimes \theta$. 
As explained in the proof of Lemma 4.2.4-(i) of \cite{GV}, ${\rm tc}$ is an 
open map. Since $L_1$ is compact, there exists $m_0\in \Z_{\ge 0}$ such that 
${\rm tc}(K_L,K_{L,\theta})\supset \frak c(L_1+\varpi^{m_0}_F L_2)\rtimes \theta$. 
By reverse induction, for any $m\in \Z_{\ge 0}$, each element of  
$\frak c(L_1+\varpi^{m}_F L_2)$ can be written as 
$g^{-1}\delta \theta(g)$ for some $g\in K_L$ and $\delta\in 
\frak c(L_1+\varpi^{m+1}_F L_2)$. 
Since $\frak c(L)=1+L$ (cf. Lemma 4.2.3 of \cite{GV}), any element $\frak c(L_1+\varpi^{m}_F L_2)$ is written by 
$1+X$ for some  $X=X_1+X_2$ with $X_1\in L_1$ and $X_2\in \varpi^m_F L_2$. 
By assumption on $p$, $F(\zeta_d)/F$ is unramified. Therefore we may 
suppose that $F$ contains a primitive $d$-th root of unity in the argument below.  
Let us write $X_2=\ds\sum_{i}X_{2,i}$ where $\{X_{2,i}\}_i$ are eigenvectors of $d\theta$ 
with $d\theta X_{2,i}=\zeta^{n_i}_d X_{2,i},\ \zeta^{n_i}_d\neq 1$. 
Since $d\theta$ has the distinct eigenvalues, the $\zeta^{n_i}_d$'s are 
different each other. Further, $1-\zeta^{n_i}_d$ is a unit in $\O_F$ by assumption on 
$p$ and $d$. 
Thus, each $X_{2,i}$ belongs to $\varpi^m_F L_2$. 
Put $Y=\sum_{i}a_i X_{2,i}\in \varpi^m_F L_2$ with $a_i=(1-\zeta^{n_i}_d)^{-1}$. 
As explained in the mid of the proof of Lemma 4.2.4 of \cite{GV}, 
$\frak c(Y)\equiv 1+Y\ {\rm mod}\ \varpi^{m+1}_F$. Thus, using 
the assumption $L\cdot L\subset \varpi_F L$,  
\begin{eqnarray}
\frak c (Y)^{-1}(1+X)\theta(\frak c(Y))&=&
\frak c (Y)^{-1}(1+X)\frak c(d\theta(Y)) \nonumber \\
&\equiv &  
(1-Y)(1+X)(1+\sum_{i}\zeta^{n_i}_d a_i X_{2,i})
\ {\rm mod}\ \varpi^{m+1}_F \nonumber  \\
&\equiv &  
1-Y+X_1+X_2+\sum_{i}\zeta^{n_i}_d a_i X_{2,i}
\ {\rm mod}\ \varpi^{m+1}_F \nonumber \\
&=& 1+X_1  \ {\rm mod}\ \varpi^{m+1}_F. \nonumber 
\end{eqnarray}
By using $\frak c(L)=1+L$ again, we have the claim. 

Finally let us prove the third claim, namely, the semisimple descent. 
Let $\gamma\in H(F)_{{\rm tu}}$. 
By Lemma \ref{ctf}, $D_{\tG}(\gamma\rtimes\delta)=D_H(\gamma)$. 
If the characteristic polynomial of $\gamma$ has multiple roots, then 
$D_H(\gamma)=0$. Therefore, we may assume that the eigenvalues of 
$\gamma$  are different each other. In particular, $G_{m\rtimes \delta}=H_\gamma$ is a 
torus. In particular $H_\gamma(F)$ is unimodular and so are $G(F)$ and $H(F)$.  
What we need to check is, by definition, the equality without 
normalizing factors:
$${\rm vol}^{-1}(K\cap H(F))\int_{H_\gamma(F)\bs H(F)}
\textbf{1}_{C_H}(h^{-1}\gamma h) dh/dh_\gamma$$
$$={\rm vol}^{-1}(K)\int_{G_{\gamma\rtimes\theta}(F)\bs G(F)}
\textbf{1}_{C}(g^{-1}(\gamma\rtimes \theta) g) dg/dg_{\gamma\rtimes \theta}.$$
However, 
applying Lemma 4.2.5 of \cite{GV} to $G_1=G,\ H_1=H,\ I_1=H_\gamma$, we have 
the claim. 
\end{proof}

\section{Proofs of the main theorems}\label{proof}
We first prove Theorem \ref{main1} and Theorem \ref{main2} 
simultaneously, and  then Theorem \ref{main3} after that.   
\begin{proof}(Proofs of Theorem \ref{main1} and Theorem \ref{main2}) 
Recall $f^H={\rm vol}(K_E(m)_\theta)\textbf{1}_{K_E(m)_\theta}$ and 
$f^G={\rm vol}(K_E(m))\textbf{1}_{K_E(m)}$. Put  
$\wf^G={\rm vol}(K_E(m))\textbf{1}_{K_E(m)\rtimes \theta}$ corresponding to $f^G$ under 
$G(F)\stackrel{\sim}{\lra} \tG(F),\ g\mapsto g\rtimes\theta$. 
In view of matching, if a semisimple element $\gamma$ in $H(F)$ is not regular, 
then $\delta\in G(F)$ is not 
$\theta$-regular for 
which $\gamma$ is a norm of $\delta$. In fact, it follows from the proof of 
Lemma 1.1 of \cite{AC}, $G_{\delta\rtimes \theta}$ is an inner form of $H_\gamma$. 
The claim follows from this. Therefore, the both sides in Theorem 
\ref{main2} vanish because of the normalizing factors which vanish on such elements.  
Thus we may assume that $\gamma\in H(F)$ is regular, then $H_\gamma$ is a torus and it is isomorphic to $G_{\delta\rtimes \theta}={\rm Cent}_\theta(\delta,G)$. In particular, it is abelian. 
Therefore, $\delta$ is a strongly $\theta$-regular element.  The sum of the right hand side 
in (\ref{transfer}) runs over a set of complete representatives of 
$G(F)$-conjugacy classes of regular elements $\delta$ such that $\gamma$ is 
a norm of $\delta$. Plugging this into Theorem \ref{tf-triv}, 
\begin{equation}\label{eq1}
\sum_{\delta}\Delta^{{\rm IV}}(\gamma,\delta)
I_{\delta\rtimes \theta}(\wf^G,dg,dg_{\gamma\rtimes\theta})=
\sum_{\delta:{\rm regular} \atop 
\text{$\gamma$ is a norm of $\delta$}}I_{\delta\rtimes \theta}(\wf^G,dg,dg_{\delta\rtimes\theta}).
\end{equation}
Suppose that $\gamma\not\in H(F)_{{\rm tu}}$. For any $\delta$ on the right in the above sum, 
it holds that $\delta\not\in G(F)_{{\rm tu}}$ by Remark \ref{diagonal}.   
 Since $f^H$ and $\wf^G$ are 
supported on topologically unipotent elements, the equality in the claim trivially holds. 
Thus, we may assume that $\gamma\in H(F)_{{\rm tu}}$ and  
$\delta\in G(F)_{{\rm tu}}$ for $\gamma$ and $\delta$ in question.  
Let $\delta_1,\delta_2$ be 
elements in $G(F)_{{\rm tu}}$ such that $\gamma$ is a norm of both of them. 
By Lemma 1.1-(ii) of \cite{AC}, $\delta_1$ is $\theta$-conjugate to $\delta_2$. 
Therefore,  (\ref{eq1}) becomes 
\begin{equation}\label{eq2}
\sum_{\delta}\Delta^{{\rm IV}}(\gamma,\delta)
I_{\delta\rtimes \theta}(\wf^G,dg,dg_{\delta\rtimes\theta})=
I_{\delta\rtimes \theta}(\wf^G,dg,dg_{\delta\rtimes\theta})
\end{equation}
for any such $\delta$. 
By Lemma \ref{cprime}, there exists an element $\gamma'\in H(F)_{{\rm tu}}$ 
such that $\gamma'^d=\gamma$. 
Thus $N_{E/F}(\gamma')=\gamma'^d=\gamma$. 
By Lemma 1.1-(ii) of \cite{AC} and Lemma \ref{bijection}, 
we may assume that $\delta=\gamma'$. 
To be more precise, 
\begin{equation}\label{eq3}
I_{\delta\rtimes \theta}(\wf^G,dg,dg_{\delta\rtimes\theta})
=I_{\gamma'\rtimes \theta}(\wf^G,dg,dg_{\gamma'\rtimes\theta})
\end{equation} for any 
element $\gamma'$ of $H(F)$ which is $H(F)$-conjugate to an element whose usual norm is $\gamma$.  
Since $f^G$ is supported on $K_E(m)$ and the homeomorphism in 
Lemma \ref{cprime} induces a homeomorphism $K_E(m)\stackrel{\sim}{\lra}K_E(m)$. 
Thus, 
\begin{equation}\label{eq4}
I_{\gamma'\rtimes \theta}(\wf^G,dg,dg_{\gamma'\rtimes\theta})=
I_{\gamma\rtimes \theta}(\wf^G,dg,dg_{\gamma\rtimes\theta}).
\end{equation}
Further, it follows from Theorem \ref{mainSSD}-(3) that 
\begin{equation}\label{eq5}
I_{\gamma\rtimes \theta}(\wf^G,dg,dg_{\gamma\rtimes\theta})=
I_{\gamma}(f^H,dh,dh_{\gamma}).
\end{equation}
Theorem \ref{main1} follows from the equations (\ref{eq1}) through (\ref{eq5}). 
Theorem \ref{main2} follows from the equations (\ref{eq3}) through (\ref{eq5}). 
\end{proof}

\begin{proof}
(A proof of Theorem \ref{main3}) 
As the referee suggested, we shall prove only a local version and the global one is an immediate consequence.  
Let us keep the notation in Theorem \ref{main1}. 
Let $\pi$ be an irreducible admissible representation of $H(F):=GL_n(F)$.  
Suppose we have a local base change $\Pi$ of $\pi$ which is an irreducible admissible representation of $G(F)=GL_n(E)$ 
(see \cite[the remark at p.59 from line -9]{AC} for the existence). Let $V$ be the representation space of $\Pi$ and 
$\Pi^\theta$ be the twist of $\Pi$ by $\theta$ which is given by the action of 
$\Pi^\theta(g)=\Pi(\theta(g))$ on $V$ for each $g\in G(F)$. 
Let $I_\theta:V\lra V$ be the intertwining operator defined in \cite[Chapter 1, Section 2, p.10]{AC} satisfying $I^d_\theta={\rm id}_V$ and $\Pi I_\theta=I_\theta \Pi^\theta$. 
Recall $${\rm tr}(\pi(f)):=\int_{H(F)} {\rm ch}_{\pi}(g) f(g) dg,\ f\in C^\infty_c(H(F))$$ 
where ${\rm ch}_{\pi}$ is the character of $\pi$, and the integral on the right hand side of the equation can be decomposed into the orbital integrals and the integral over the orbital integrals (cf .\cite[Theorem 5.11]{HC}). A similar integral is obtained for twisted traces (cf. \cite[p.151,Example, p.153,Theorem 2]{Clozel}). 
Since $f^H,f^G$ are the functions with matching orbital integrals, 
Definition 6.1 in \cite{AC} and Definition \ref{moi} 
implies 
\begin{equation}\label{twisted-match}
{\rm tr}(\pi(f^H))={\rm tr}(\Pi(f^G)I_\theta). 
\end{equation}
It follows from this and $\Pi I_\theta=I_\theta \Pi^\theta$ that 
$${\rm dim}_\C\pi^{K_E(m)_\theta}={\rm tr}(\pi(f^H))={\rm tr}(I_\theta|_{V^{K_E(m)}})$$
(we remark that $K_E(m)$ is $\theta$-stable and $V^{K_E(m)}$ is $I_\theta$-stable).
Put $N:=\dim_\C V^{K_E(m)}$. Since $I_\theta^d={\rm id}_V$, $I_\theta|_{V^{K_E(m)}}$ 
is diagonalizable and therefore we have a basis $\{v_i\}_{i=1}^N$ such that 
for each $1\le i\le N$, $I_\theta(v_i)=
\zeta^{n_i}_d v_i$ for some $n_i\in\Z$ where $\zeta_d$ stands for a primitive $d$-th 
root of unity.
Then we have  
$${\rm dim}_\C\pi^{K_E(m)_\theta}= |{\rm tr}(I_\theta|_{V^{K_E(m)}})|
=\Big|\sum_{i=1}^N\zeta^{n_i}_d\Big|\le N=\dim_\C V^{K_E(m)}.$$
This completes the proof. 
\end{proof}

\end{document}